\documentclass[11pt]{amsart}

\usepackage{amsthm}
\usepackage{amsmath}
\usepackage{amssymb}
\usepackage{enumerate}
\usepackage{graphicx}
\usepackage[hidelinks,pdftex]{hyperref}
\usepackage{booktabs}
\usepackage{color}
\usepackage[dvipsnames]{xcolor}
\usepackage{import}
\usepackage{tikz-cd}
\usepackage{microtype}
%\usepackage[pagewise]{lineno}\linenumbers

%\usepackage[margin=3cm]{geometry} 

%%This command stops the Math Review numbers appearing in the references
\AtBeginDocument{%
   \def\MR#1{}
}

%% Number equations and figures according to the section they are in
\numberwithin{equation}{section}
\numberwithin{figure}{section}

\theoremstyle{plain}
\newtheorem{theorem}[equation]{Theorem}
\newtheorem{corollary}[equation]{Corollary}
\newtheorem{lemma}[equation]{Lemma}

\newtheorem{proposition}[equation]{Proposition}

\newtheorem*{namedtheorem}{\theoremname}
\newcommand{\theoremname}{testing}

\theoremstyle{definition}
\newtheorem{definition}[equation]{Definition}

\newtheorem{example}[equation]{Example}

 % As in ``f maps _from_ X _to_ Y''.

%%% Symbols %%%
\newcommand{\HH}{{\mathbb{H}}}

\newcommand{\calD}{{\mathcal{D}}}

%% Referring to theorems, etc (requires careful labeling)
\newcommand{\refthm}[1]{Theorem~\ref{Thm:#1}}
\newcommand{\reflem}[1]{Lemma~\ref{Lem:#1}}
\newcommand{\refprop}[1]{Proposition~\ref{Prop:#1}}
\newcommand{\refcor}[1]{Corollary~\ref{Cor:#1}}

\newcommand{\refdef}[1]{Definition~\ref{Def:#1}}

\newcommand{\reffig}[1]{Figure~\ref{Fig:#1}}

\newcommand{\bdy}{\partial}
\newcommand{\voct}{v_{\mathrm{oct}}}
\newcommand{\vtet}{v_{\mathrm{tet}}}

%% Add shortcuts to math operators here
\newcommand{\vol}{\operatorname{vol}}

\newcommand{\cut}{\backslash\backslash}

\title[Fully augmented links in doubled 3-manifolds]{Geometry of fully augmented links in doubled 3-manifolds}

\author{Jessica S.~Purcell}
\author{Corbin Reid}
\author{John Stewart}

%\thanks{\today}

\begin{document}

\begin{abstract}
Classical fully augmented links have explicit hyperbolic geometry, and have diagrams on the 2-sphere in the 3-sphere. We generalise to construct fully augmented links projected to the reflection surface of any 3-manifold obtained by doubling a compact 3-manifold and show that the results of the classical setting extend to these links. When the resulting manifolds are hyperbolic, we find bounds on their cusp shapes and volumes. Note these links include virtual fully augmented links, and thus our bounds apply to such links when they are hyperbolic. 
\end{abstract}

\maketitle

\section{Introduction}\label{Sec:Intro}

Fully augmented links in the 3-sphere are a class of links that exhibit tractable geometric structures, arising from the geometry of a circle packing on the 2-sphere.  Any knot or link in the 3-sphere can be obtained by Dehn filling a fully augmented link, which has led to important applications; see for example~\cite{Lackenby:Vol, BlairFuterTomova, FuterPurcell:Exceptional, FKP:DehnFilling, KalfagianniLee, Millichap, Purcell:CuspShapes, Purcell:AugLinksSurvey}.

However, all these applications are for knots and links with a diagram projected onto a 2-sphere, embedded in the 3-sphere. There are many important questions in hyperbolic knot theory concerning knots and links that do not have classical diagrams of this form. For example, \emph{virtual links} are embedded in a 3-manifold homeomorphic to a thickened surface $\Sigma\times[-1,1]$, with a diagram on $\Sigma\times\{0\}$. The hyperbolic geometry of these links is of increasing interest; see for example~\cite{Kuperberg, adamsetal:2021Generalized, CKPBiperiodic}. Additionally, \emph{shadow links} lie on a Heegaard surface in a doubled handlebody, and the hyperbolic geometry of such links has connections to quantum topology; see for example~\cite{CostantinoThurston, BDKY, Kumar}. Over many years, there has been interest in the hyperbolic geometry of alternating links embedded on surfaces besides the 2-sphere in manifolds besides the 3-sphere; for example~\cite{Adams:Toroidally, Hayashi, HowiePurcell}. The purpose of this paper is to show that many of the hyperbolic geometric results for classical fully augmented links can be extended directly to classes of fully augmented links with diagrams lying on higher genus surfaces in 3-manifolds besides the 3-sphere, provided the augmented link admits a reflective symmetry.

Our starting point is the recent paper ~\cite{FuchsPurcellStewart}, which generalises the construction of fully augmented links to lie on higher genus surfaces in ambient 3-manifolds distinct from the 3-sphere, by starting with a circle packing on the conformal boundary of any infinite volume hyperbolic 3-manifold, and building a fully augmented link from the given data. Such links were used to produce knots and links converging geometrically to infinite volume hyperbolic 3-manifolds. 
This paper complements~\cite{FuchsPurcellStewart} by examining more carefully the geometry of generalised fully augmented links. However, here we start with a topological definition, and show that the defining conditions lead to geometric consequences, including the existence of a circle packing in an appropriate setting. We prove that, as in the classical setting, the links have bounded cusp geometry, and we give lower bounds on their volumes. 

We define fully augmented links carefully in Section~\ref{Sec:ConstFullAug}. Briefly, they consist of link components embedded in the projection plane, ``augmented'' by unknots encircling two strands (called \emph{knot strands}), possibly with half-twists; see Figure~\ref{Fig:HalfTwist}.
The fully augmented links that we construct lie on a doubled 3-manifold: Given a compact 3-manifold $M$ with a surface $\Sigma\subseteq\bdy M$, let $\calD_\Sigma(M)$ denote the double; see Definition~\ref{Def:DoubledManifold}. The fully augmented links will lie in a neighbourhood of $\Sigma\subset \calD_\Sigma(M)$. The key point is that by embedding the links symmetrically in a 3-manifold with reflective symmetry, we obtain rigid geometric properties that can be read off the diagram, just as in the classical case.

%First, we obtain a very precise picture of the geometry of the cusps. 

\begin{theorem}[Cusp shapes of fully augmented links]\label{Thm:CuspShapesMain}
Let $M$ be a compact orientable 3-manifold with $\Sigma\subset \bdy M$ a surface. Let $L$ be a fully augmented link on $\Sigma$ in $\calD_\Sigma(M)$. Then the torus boundary components of $\calD_\Sigma(M)-N(L)$ that correspond to link components of $\bdy N(L)$ are tiled by rectangles with opposite white and black sides satisfying the following conditions.
\begin{enumerate}
\item[(1)] Crossing circles meet two rectangles, with a longitude running along two black sides and meridian running along one white side if there is no half-twist, and running across the diagonal of a rectangle if there is a half-twist.
\item[(2)] Knot strands meeting $n_j$ crossing circles (counted with multiplicity) meet $2n_j$ rectangles, with a meridian formed from running along two black sides and a longitude running along white sides where there is no half-twist, and a diagonal of the rectangle for each half-twist.
\item[(3)] When the interior of $\calD_\Sigma(M)-N(L)$ is hyperbolic, there exists a horoball expansion about the cusps of $L$ such that each black side has length one, and each white side has length at least one.
\end{enumerate}
\end{theorem}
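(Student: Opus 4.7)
The plan is to exploit the reflection symmetry of $\calD_\Sigma(M)$ across $\Sigma$ and the totally geodesic nature of thrice-punctured spheres in hyperbolic 3-manifolds. The reflection involution $\tau$ preserves $L$ (since $L$ lies on $\Sigma$), and so descends to an involution of the link complement. In the hyperbolic case, Mostow--Prasad rigidity implies $\tau$ is realised by an isometry, so its fixed set $\Sigma\setminus L$ is totally geodesic. For each crossing circle $C$, the crossing disk $D_C$ together with its $\tau$-image assembles into a sphere $\widetilde D_C$; after accounting for the punctures from knot strands passing through it, $\widetilde D_C$ is a thrice-punctured sphere, and such surfaces are always totally geodesic when embedded in a hyperbolic 3-manifold.

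For parts (1) and (2), I would work topologically with the reflection symmetry. The involution $\tau$ acts on each cusp torus $T$ as an orientation-reversing involution whose fixed set consists of simple closed curves, which I will call \emph{black sides}. The doubled crossing disks contribute further curves on $T$ transverse to the black sides, the \emph{white sides}. Together these tile each cusp torus into rectangles. A count of incidences, using the fact that a crossing circle is augmented by exactly one crossing disk and that a knot strand meeting $n_j$ crossing circles (with multiplicity) passes through that many crossing disks, gives $2$ rectangles for a crossing circle cusp and $2n_j$ rectangles for a knot strand cusp. The meridian and longitude descriptions then follow from a framing analysis: without a half-twist, the meridian of a crossing circle is homologous in the cusp to the boundary of the crossing disk, hence runs along a white side; a half-twist shifts the meridian by one longitude, tilting it across the diagonal of a rectangle. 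The same twist analysis applies strand-by-strand for each crossing at a knot strand cusp.

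For part (3), I would use the rigid geometry of thrice-punctured spheres: each admits a unique complete hyperbolic structure in which a maximal embedded cusp is bounded by a horocycle of length $2$, and on any such maximal horocycle the two geodesic arcs running to the other two cusps are attached at points separated by length exactly $1$ along the horocycle. Now choose horoball neighbourhoods about all cusps of $L$ so that their restrictions to each $\widetilde D_C$ recover this maximal cusp picture. Because every black side lies on the fixed geodesic $\Sigma\cap\widetilde D_C$, this normalisation forces each black side to have length exactly $1$. The white sides consist of horocyclic arcs that either run around a cusp of $\widetilde D_C$ or stretch between adjacent doubled crossing disks; in either case, embeddedness of the horoballs in $\calD_\Sigma(M)-N(L)$ gives the length lower bound of $1$, via a standard argument that a shorter white side would force two horoballs about the same cusp lift to overlap.

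The main obstacle I anticipate is part (3). While the rectangle tiling and the combinatorics of rectangles per cusp fall out of the reflection symmetry, establishing the precise length $1$ on every black side and the sharp lower bound $1$ on every white side requires carefully coordinating the horoball expansion across every cusp simultaneously. The classical proof on $S^2\subset S^3$ leans on an explicit ambient circle packing, which is not available here; instead, the argument must be driven by the totally geodesic doubled crossing disks, and the embeddedness of horoballs must be checked inside $\calD_\Sigma(M)-N(L)$ rather than by direct appeal to a model in $\HH^3$.
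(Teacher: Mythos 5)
Your overall architecture (reflection symmetry $\Rightarrow$ totally geodesic reflection surface and crossing discs $\Rightarrow$ rectangle tiling $\Rightarrow$ incidence count, then thrice-punctured-sphere geometry for the lengths) matches the paper's, but there are two problems. First, in parts (1)--(2) you have the colour convention backwards and you misidentify the framing. In the paper (and implicitly in the statement) the \emph{white} sides are the intersections of the cusp torus with the reflection surface and the \emph{black} sides come from the crossing discs; you reverse this. More substantively, the curve $\bdy N(C_j)\cap D_j$ is a \emph{longitude} of the crossing circle, not a meridian: $D_j$ is a disc bounded by $C_j$, so its boundary on the cusp torus is parallel to $C_j$, while the meridian lies on the reflection surface, which meets $C_j$ in two points. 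Your claim that ``the meridian of a crossing circle is homologous in the cusp to the boundary of the crossing disk'' is therefore false; combined with the colour swap you land on the stated conclusion, but by two compensating errors rather than a correct argument. The paper's count (two rectangles per crossing circle, $2n_j$ per knot strand) and the shearing analysis for half-twists are otherwise as you describe.

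Second, and more seriously, part (3) has a genuine gap that you yourself flag but do not close. You propose to ``choose horoball neighbourhoods about all cusps of $L$ so that their restrictions to each [crossing disc] recover the maximal cusp picture,'' but a single cusp of $L$ meets many crossing discs, and the intrinsic geometry of each thrice-punctured sphere says nothing about whether the corresponding three-dimensional horoballs can be expanded that far while remaining embedded and pairwise disjoint in $\calD_\Sigma(M)-N(L)$. This is precisely the content of the paper's Proposition~\ref{Prop:CanonicalEdge}: one expands all horoballs uniformly toward the midpoints of the black edges, and if the expansion is obstructed by an early tangency one obtains a horoball $H$ of diameter greater than one; reflection in the white vertical planes rules out a centre in the interior of a rectangle, so the centre lies over a point of $L$ on a white face, and then $H$ is forced to contain a midpoint of a black edge, a contradiction. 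Without some version of this argument the claim ``each black side has length one'' is unsupported. Relatedly, your mechanism for the white-side bound (overlapping horoballs) presupposes exactly this unproven expansion; the paper instead derives the bound from Lemma~\ref{Lem:CirclePattern}, where the two white circles of diameter one tangent to the vertical white planes must be disjoint because the totally geodesic reflection surface is embedded, which gives ``white length $\geq 1$'' independently of any horoball choice.
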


Combined with Dehn filling theorems, \refthm{CuspShapesMain} has implications for geometries of 3-manifolds obtained by filling these links. For example, the 6-Theorem will give results on when they are hyperbolic~\cite{Agol:Bounds, Lackenby:Word}; see also~\cite{FuterPurcell:Exceptional}. 

% We obtain lower bounds on volumes as well. 
\begin{theorem}[Lower volume bounds]\label{Thm:VolumeMain}
Let $M$ be a compact orientable 3-manifold with $\Sigma\subset \bdy M$. Let $L$ be a hyperbolic fully augmented link on $\Sigma$ in $\calD_{\Sigma}(M)$ with $c$ crossing circles. Then the volume of $\calD_{\Sigma}(M)-L$ satisfies:
\[\vol(\calD_{\Sigma}(M)-L) \geq 2\,\voct\,(c-\chi(M))\]
Here $v_{oct}=3.66386\dots$ is the volume of a hyperbolic regular ideal octahedron.
\end{theorem}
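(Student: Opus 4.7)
The plan is to exploit the reflection symmetry of $\calD_\Sigma(M)$ across $\Sigma$: cut $X := \calD_\Sigma(M) - L$ along a totally geodesic surface, then apply Miyamoto's lower bound on volumes of hyperbolic 3-manifolds with totally geodesic boundary.

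First, observe that the canonical involution of $\calD_\Sigma(M)$ swapping the two copies of $M$ fixes $\Sigma$ pointwise and preserves $L$, because knot strands lie on $\Sigma$ and each crossing circle meets $\Sigma$ transversely in a symmetric pair of points. Hence it restricts to a homeomorphism of $X$, which by Mostow--Prasad rigidity is isotopic to an orientation-reversing hyperbolic isometry. Its fixed set $\Sigma_L := \Sigma \cap X$ is a totally geodesic surface in $X$, cutting $X$ into two isometric pieces $X^+, X^-$, each with totally geodesic boundary $\Sigma_L$ and toral cusps inherited from $L$ (and from any boundary components of $M$ not contained in $\Sigma$, which must themselves be tori for the hyperbolic structure to exist).

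Applying Miyamoto's inequality (in the version allowing cusps) to the hyperbolic 3-manifold $X^+$ with totally geodesic boundary yields $\vol(X^+) \geq -\voct \chi(X^+)$, so $\vol(X) = 2\vol(X^+) \geq -2\voct \chi(X^+)$ by the isometry. It remains to identify $-2\chi(X^+)$ with $2(c-\chi(M))$. Every boundary component of $X$ is a torus, so $\chi(X)=0$. Inclusion--exclusion across the cut $X = X^+ \cup_{\Sigma_L} X^-$ gives $0 = 2\chi(X^+) - \chi(\Sigma_L)$, hence $\chi(X^+) = \chi(\Sigma_L)/2$.

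To compute $\chi(\Sigma_L)$: the surface $\Sigma_L$ is obtained from $\Sigma$ by removing an open regular neighborhood of $L \cap \Sigma$. Each of the $c$ crossing circles meets $\Sigma$ transversely in two points, so two open disks are removed, reducing $\chi$ by $2$ per crossing circle; knot strands lie on $\Sigma$, and their annular open neighborhoods do not change $\chi$. Hence $\chi(\Sigma_L) = \chi(\Sigma) - 2c$. Finally, the identity $\chi(\bdy M) = 2\chi(M)$ for compact 3-manifolds, combined with $\chi(\bdy M - \Sigma) = 0$ (those components being tori), gives $\chi(\Sigma) = 2\chi(M)$. Assembling: $\vol(X) \geq -\voct\,\chi(\Sigma_L) = -\voct(2\chi(M) - 2c) = 2\voct(c - \chi(M))$, as desired. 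The main obstacle is invoking Miyamoto's theorem in the form that permits cusps and totally geodesic boundary simultaneously; with that tool in hand, the remainder is symmetry and Euler characteristic bookkeeping.
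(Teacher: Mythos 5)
Your overall strategy coincides with the paper's: the reflection symmetry yields a totally geodesic surface, you cut along it and invoke Miyamoto, and the rest is Euler characteristic bookkeeping. Two of your steps, however, have genuine gaps.

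First, your opening claim that the canonical involution of $\calD_\Sigma(M)$ swapping the two copies of $M$ ``preserves $L$'' is false when $L$ has half-twists, which the definition of a fully augmented link allows and the theorem covers. At a half-twisted crossing circle the naive reflection reverses the direction of the crossing, so it does not carry $L$ to itself; one must compose with a twist homeomorphism supported near the crossing disc (this is Proposition~\ref{Prop:Reflection}), and the fixed surface of the resulting involution is then not $\Sigma\cap X$ but a surface obtained from it by cutting and regluing along the arcs meeting half-twisted crossing discs. The Euler characteristic is unchanged by this modification, so your count survives, but the argument as written only treats the no-half-twist case.

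Second, your computation assumes every component of $\bdy M$ not contained in $\Sigma$ is a torus, and you use this twice: to conclude $\chi(X)=0$ and to conclude $\chi(\Sigma)=2\chi(M)$. The theorem does not assume this, and the paper applies it where it fails, e.g.\ \refcor{LowerBoundVirtual} with $M=\Sigma\times[-1,0]$ and $g>1$, where $\Sigma\times\{-1\}$ is a higher-genus (totally geodesic) boundary component. As it happens, the two errors cancel: in general $\chi(X)=\tfrac12\chi(\bdy X)=2\chi(M)-\chi(\Sigma)$, and substituting this into your inclusion--exclusion still gives $\chi(X^+)=\bigl(\chi(X)+\chi(\Sigma_L)\bigr)/2=\chi(M)-c$, so the final inequality is correct --- but not for the reasons you state. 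The paper sidesteps both issues by cutting along the reflection surface \emph{and} the half crossing discs, recognising the resulting manifold directly as two copies of $M$ (hence $\chi=2\chi(M)$), and then accounting for the $2c$ one-handles restored when the half-discs are reglued.
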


Combining \refthm{VolumeMain} with \refthm{CuspShapesMain} and results on volume change under Dehn filling, we obtain lower bounds on volumes of the 3-manifolds obtained by Dehn filling fully augmented links; see~\cite{FKP:DehnFilling, Purcell:Volumes}. A result along these lines is \refthm{VolumeDehnFilling} below. 

We are also able to generalise the upper bounds on volumes of fully augmented links in thickened surfaces. We restrict to those that are \emph{cellular}, meaning the complementary regions of the diagram are all discs. 

\begin{theorem}[Upper volume bounds, virtual setting]\label{Thm:VolumeVirtualUpper}
Let $\Sigma$ be a surface of genus $g$, and let $M$ be the manifold $\Sigma\times[-1,0]$. Obtain $\calD_{\Sigma\times\{0\}}(M)=\Sigma\times[-1,1]$, doubled across the component $\Sigma\times\{0\}$ of $M$. Let $L$ be a cellular fully augmented link in $\calD_{\Sigma\times\{0\}}(M)$ with $c$ crossing circles. Then the volume of $\calD_{\Sigma\times\{0\}}(M)-L$ is bounded above as follows.
\[\mbox{When $g=1$, } \vol(\Sigma\times(-1,1)- L)\leq 10\vtet\,c,\]
\[\mbox{when $g>1$, } \vol((\Sigma\times [-1,1])- L)\leq 6\,\voct\,c. \]
Here, $\vtet$ is the volume of a hyperbolic regular ideal tetrahedron, and $\voct$ is the volume of a hyperbolic regular ideal octahedron. In the case $g=1$, $\Sigma\times\{\pm 1\}$ are cusps, and when $g>1$, we take the hyperbolic structure in which $\Sigma\times\{\pm 1\}$ are totally geodesic.

Moreover, this theorem is sharp in the case $g=1$. 
\end{theorem}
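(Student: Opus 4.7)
The plan is to construct an explicit ideal polyhedral decomposition of $\calD_{\Sigma\times\{0\}}(M)-L$, straighten its faces to be totally geodesic in $\HH^3$, and bound each polyhedron's volume by the maximum volume of an ideal polyhedron of its combinatorial type. This mirrors the classical Agol--Thurston argument for fully augmented links in $S^3$, adapted to the doubled manifold setting.

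First I would use the reflection symmetry of $\calD_{\Sigma\times\{0\}}(M)$ across $\Sigma\times\{0\}$ to cut the complement into two isometric halves, one in each copy of $M=\Sigma\times[-1,0]$. In each half, further cut along the crossing disks of $L$. Because $L$ is cellular, the diagram regions on $\Sigma\times\{0\}$ are all discs, so the cuts produce a cell decomposition with white faces from diagram regions, shaded faces from half crossing disks, and ideal vertices at intersections of knot strands with crossing circles together with the cusps of $L$. Straightening via the developing map gives ideal polyhedra in $\HH^3$ when $g=1$, or ``mixed'' polyhedra with non-ideal totally geodesic faces (from $\Sigma\times\{\pm 1\}$) when $g>1$.

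For $g=1$, the surfaces $\Sigma\times\{\pm 1\}$ are cusps, so every face is ideal after straightening. I would subdivide each polyhedron into ideal tetrahedra and count: each crossing circle, together with its half crossing disks and adjacent diagram regions, should contribute a bounded fixed number of tetrahedra, expected to total $10$ per crossing circle when summed over the two halves. Applying $\vol(\Delta)\leq\vtet$ to each tetrahedron yields the bound $10\vtet c$. For $g>1$, the boundary $\Sigma\times\{\pm 1\}$ is totally geodesic. I would handle this either by doubling across the totally geodesic boundary (and halving the resulting bound) or by directly decomposing each mixed polyhedron into pieces of volume at most $\voct$, using that the regular ideal octahedron maximises volume among ideal polyhedra with six vertices. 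The count should give at most $6c$ octahedra, producing the bound $6\voct c$.

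Sharpness in the $g=1$ case would follow by exhibiting an explicit cellular fully augmented link in $T^2\times[-1,1]$ whose polyhedral decomposition consists of regular ideal tetrahedra realising the bound --- a natural candidate is a toroidal analogue of the classical minimally twisted chain link, or a symmetric arrangement of crossing circles lifted from the square torus. The main obstacle is the combinatorial bookkeeping, particularly establishing the precise constants $10$ and $6$. The $g>1$ case is the more delicate: one must carefully handle the totally geodesic boundary so that the octahedral decomposition is compatible with the doubling or truncation procedure, without losing volume when passing between ideal and non-ideal pieces.
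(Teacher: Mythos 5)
Your overall strategy---cut along the reflection surface and the crossing discs, subdivide, and bound each piece---is the right one, but the proposal leaves out precisely the constructions that produce the constants $10$ and $6$, and in the $g>1$ case it reaches for the wrong tool. For $g=1$, after cutting you are left with two copies of $\Sigma\times(-1,0]$ carrying $2c$ black ideal triangles each on $\Sigma\times\{0\}$, together with prism-like white regions $W\times(-1,0]$; these are not yet finite collections of ideal tetrahedra. The paper's key moves are (i) to \emph{cone} each black triangle $T$ to the cusp $\Sigma\times\{-1\}$, so that $T\times(-1,0]$ becomes an ideal tetrahedron (giving $4c$ tetrahedra over both copies), and (ii) to \emph{stellar subdivide} each doubled white region $W\times(-1,1)$ about an ideal edge $V\times(-1,1)$, producing one tetrahedron per edge of $W$; counting edges of white regions as intersections with the cut-open crossing discs gives $6c$ more, hence $10c$ total. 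Your proposal gestures at ``a bounded fixed number of tetrahedra, expected to total $10$'' without this coning/subdivision, and since the entire content of the theorem is the explicit count, that is a genuine gap rather than bookkeeping.

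For $g>1$ the gap is more serious: the vertices of your pieces lying on $\Sigma\times\{\pm 1\}$ are not ideal (the boundary is totally geodesic), so the fact that the regular ideal octahedron maximises volume among ideal polyhedra with six vertices does not apply, and doubling across $\Sigma\times\{\pm1\}$ does not obviously recover the stated constant. The paper instead observes that coning produces \emph{hyperideal} vertices, regroups the black triangles in pairs and stellar subdivides so that every piece is a tetrahedron with exactly two ideal and two hyperideal vertices, and then invokes the bound of Adams, Calderon, and Mayer that such a tetrahedron has volume at most $\voct/2$; the count $12c$ of such tetrahedra gives $6\,\voct\,c$. Some substitute for this hyperideal volume bound is needed; without it your argument does not close. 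Your sharpness suggestion for $g=1$ (a toroidal chain-link example) is essentially the paper's: the link on the torus covered by the Agol--Thurston infinite chain link fence, whose decomposition as above consists of regular ideal tetrahedra.
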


Theorem~\ref{Thm:VolumeVirtualUpper} should be compared to~\cite[Theorem~1.4]{KalfagianniPurcell}. Our proof is similar to that proof, but the links under consideration are different. The existence of such upper bounds is significant, as the second author has recently shown that for some 3-manifolds $M$, there are no comparable upper volume bounds~\cite{Reid:NoUpperVolumeBound}.

Note that there are other recent generalisations of fully augmented links lying in thickened surfaces, due to Adams~\emph{et al}~\cite{adamsetal:2021Generalized}, Kwon~\cite{kwon2020fully}, and Kwon and Tham~\cite{kwon2020hyperbolicity}. The links of those papers are subsets of the more broad class that we study here.

\subsection{Motivation}

The proofs of the main theorems are very similar to those in the classical setting, although the ambient 3-manifolds are very different. The main purpose of this article is to point out these extensions, so that fully augmented links can be applied in new settings.

There are applications to links in well-known doubled manifolds:
\begin{itemize}
\item \emph{Virtual fully augmented links}, lying on $\Sigma\times\{0\}$ in $\calD_{\Sigma\times\{0\}}(\Sigma\times [0,1])$; see \refcor{LowerBoundVirtual}. 
\item \emph{Fully augmented links in doubled handlebodies}, lying on $\bdy H$ for a handlebody $H$ in $\calD_{\bdy H}(H)$; see \refcor{LowerBoundHandlebody}. These have connections to shadow links. 
\end{itemize}

While the proofs are very similar to those of~\cite{FuterPurcell:Exceptional, FKP:DehnFilling} (see also~\cite{Purcell:AugLinksSurvey} and~\cite[Chapter~7]{Purcell:HyperbolicKnotTheory}), we strive to include enough detail here to ensure the paper is self-contained.

\subsection{Acknowledgements}
This work was supported in part by the Australian Research Council, grant DP210103136.

%%%%%%%%%%%%%%%%%%%%%%%%%%%%%%%%%%%%%%%%%%%%%%%%%%%%%%%%%%%%%%%%%%%%%%%%%%%%%%%%%

%%%%%%%%%%%%%%%%%%%%%%%%%%%%%%%%%%%%%%%%%%%%%%%%%%%%%%%%%%%%%%%%%
\section{Constructing fully augmented links}\label{Sec:ConstFullAug}

In this section, we define fully augmented links. We start with a very general definition as in~\cite{FuchsPurcellStewart}, on any surface in any 3-manifold. 

\begin{definition}\label{Def:FALNoTwists}
Fix a 3-manifold $M$, and fix $\Sigma$ an embedded surface in $M$. A \emph{fully augmented link on $\Sigma$ without half-twists} is a link $L$ embedded in a regular neighbourhood of $\Sigma$ consisting of components $K_1, \dots, K_m$ and $C_1, \dots, C_n$ that satisfies the following properties:
\begin{enumerate}
\item[(1)] Each component $K_i$ is embedded in $\Sigma$, for $1\leq i\leq m$
\item[(2)] Each $C_j$ bounds a disc $D_j$ in $M$ such that $D_j$ intersects $\Sigma$ transversely in a single arc, and $D_j$ meets the union $\coprod_{i} K_i$ in exactly two points, for $1 \leq j \leq n$.
\item[(3)] A projection of $L$ to $\Sigma$ yields a 4-valent \emph{diagram graph} on $\Sigma$. We require this diagram to be connected. 
\end{enumerate}
The components $C_j$ are called \emph{crossing circles}, and the discs $D_j$ are called \emph{crossing discs}. The components $K_k$ lie on the \emph{projection surface}.
\end{definition}

We also allow a \emph{half-twist} at a crossing circle, obtained by cutting along $D_j$ and regluing so that the two points of intersection of $\coprod_i K_i$ with $D_j$ are swapped; see \reffig{HalfTwist}. Note after performing a half-twist, condition~(1) in \refdef{FALNoTwists} is typically not satisfied.
\begin{figure}
  \centering
  \includegraphics{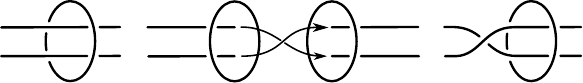}
  \caption{Cutting and regluing at a crossing circle to add a half-twist.}
  \label{Fig:HalfTwist}
\end{figure}

\begin{definition}\label{Def:FAL}
A \emph{fully augmented link on a surface $\Sigma$} is obtained from a fully augmented link on $\Sigma$ without half-twists by inserting a single half-twist at some or no crossing circles. 
\end{definition}

\begin{definition}\label{Def:DoubledManifold}
Let $M$ be a compact orientable 3-manifold with distinguished boundary component $\Sigma \neq \emptyset \subset \bdy M$. The \emph{doubled manifold (along $\Sigma$)}, denoted $\calD(M)$ (or $\calD_\Sigma(M)$ when we wish to emphasize $\Sigma$), is the manifold obtained by taking two copies of $M$, one with reversed orientation, and identifying them along $\Sigma$. 
\end{definition}

\begin{proposition}\label{Prop:Reflection}
Suppose $M$ is a compact 3-manifold with distinguished boundary component(s) $\Sigma \neq \emptyset \subset \bdy M$, and $L$ is a fully augmented link on $\Sigma$ in the double $\calD_\Sigma(M)$. Then the link complement $\calD_\Sigma(M)-L$ admits a reflective symmetry. The reflection fixes a surface pointwise that agrees with $\Sigma$ everywhere aside from a neighbourhood of each crossing disc meeting a half-twist. It reflects each crossing disc $D_j$ through the arc of intersection of $D_j$ with $\bdy M$ in $\calD(M)$.
\end{proposition}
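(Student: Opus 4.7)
The plan is to start from the natural orientation-reversing involution of $\calD_\Sigma(M)$ inherited from the doubling construction, handle the no-half-twist case directly, and then transport the involution through the cut-and-reglue that introduces half-twists.

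The doubling construction supplies a canonical orientation-reversing involution $\tau\from\calD_\Sigma(M)\to\calD_\Sigma(M)$ that swaps the two copies of $M$ and fixes $\Sigma$ pointwise. Suppose first that $L$ has no half-twists. Each knot component lies on $\Sigma$ and so is fixed pointwise by $\tau$. For each crossing disc $D_j$, the arc $\alpha_j=D_j\cap\Sigma$ divides $D_j$ into two half-discs, one in each copy of $M$. By a standard ambient isotopy in the irreducible 3-manifold $M$, we may replace the ``lower'' half of each $D_j$ (and the lower arc of $C_j=\bdy D_j$) by the $\tau$-mirror of the ``upper'' half, making $D_j$ $\tau$-symmetric and perpendicular to $\Sigma$ along $\alpha_j$. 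After these isotopies $\tau(L)=L$ setwise, so $\tau$ descends to a reflection of $\calD_\Sigma(M)-L$ with fixed surface $\Sigma$, acting on each $D_j$ as reflection through $\alpha_j$.

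Now allow half-twists; let $L_0$ be the fully augmented link obtained from $L$ by removing every half-twist, with $\tau$ arranged to act on $\calD_\Sigma(M)-L_0$ as in the previous paragraph. The complement $\calD_\Sigma(M)-L$ is recovered from $\calD_\Sigma(M)-L_0$ by, at each half-twisted $D_j$, cutting along the twice-punctured disc $D_j\setminus\{p_1,p_2\}$ (where $\{p_1,p_2\}=L_0\cap D_j$) and regluing via the swap $s_j\from D_j\to D_j$, the half-rotation about the midpoint of $\alpha_j$ that exchanges $p_1$ and $p_2$. The reflection $\tau$ descends to an involution $\tau'$ of $\calD_\Sigma(M)-L$ provided it commutes with each $s_j$ on $D_j$. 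Since $\tau|_{D_j}$ is reflection through $\alpha_j$ and $s_j$ is rotation by $\pi$ about a point on $\alpha_j$, both preserve $\alpha_j$ setwise, and a direct calculation shows they commute as involutions of $D_j$.

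Outside neighbourhoods of the half-twisted discs, $\tau'=\tau$, so the fixed surface there coincides with $\Sigma$. Inside each such neighbourhood, the cut-and-reglue identifies the two sides of $\alpha_j\subset\Sigma$ via the flip $s_j|_{\alpha_j}$, producing a surface that agrees with $\Sigma$ away from $D_j$ but twists through $D_j$; this is the fixed surface of $\tau'$ locally. The action of $\tau'$ on each $D_j$ remains reflection through $\alpha_j$, inherited from $\tau$ unchanged. The main obstacle is making the descent rigorous: it reduces to the elementary fact that a line-reflection commutes with a half-rotation about a point on that line, together with the accompanying verification that the induced involution on the quotient has the stated fixed surface.
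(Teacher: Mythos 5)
Your proof is correct and follows essentially the same route as the paper: the no-half-twist case uses the canonical involution of the double with crossing discs arranged perpendicular to $\Sigma$, and the half-twist case is handled by a local correction at each twisted disc. The only difference is packaging --- the paper composes the reflection with a full-twist homeomorphism supported near $D_j$ to undo the reversal of the half-twist, whereas you present the twisted complement as a cut-and-reglue of the untwisted one along $D_j$ and check that the reflection descends because a reflection in $\alpha_j$ commutes with the half-rotation $s_j$ about a point of $\alpha_j$; these are equivalent, and your description of the resulting fixed surface matches the paper's Definition~\ref{Def:LReflectiveSymmetry}.
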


\begin{proof}
If the link has no half-twists, the symmetry is obtained by the reflection through $\Sigma$ in the double $\calD(M)$. Note that it fixes $\Sigma$ pointwise, hence fixes pointwise each component $K_i$ on the projection plane. Arranging crossing discs $D_j$ to be perpendicular to $\bdy M$ gives the required reflection in $D_j$.

At a half-twist, the reflection through $\Sigma$ in $\calD(M)$ still takes a crossing disc $D_j$ to itself, but it reverses the direction of the crossing. Apply a twist homeomorphism: Cut along $D_j$ and rotate by $2\pi$ in the opposite direction. This is a homeomorphism of the complement of $C_j$, equal to the identity outside a neighbourhood of $D_j$, and it takes the link back to the original with the half-twist in the original direction. Observe that the composition of the reflection with the twist homemomorphism is an orientation reversing symmetry; this is the required reflection. 
\end{proof}

\begin{definition}\label{Def:LReflectiveSymmetry}
A fully augmented link as in \refprop{Reflection} is called a \emph{fully augmented link with reflective symmetry}. The surface fixed pointwise by the reflection is called the \emph{surface of reflection}. Note it is obtained from the surface of projection by cutting along each arc meeting a crossing disc at a half-twist and regluing. 
\end{definition}

\begin{example}\label{Example:Virtual}
Let $\Sigma$ be a closed orientable surface. A fully augmented link on the surface $\Sigma \times \{0\}$ in $\Sigma\times [-1,1]$ is called a \emph{virtual fully augmented link}. Note it has reflective symmetry, by setting $M=\Sigma\times[0,1]$ in \refdef{LReflectiveSymmetry}. These are examples of the fully augmented links of~\cite{adamsetal:2021Generalized, kwon2020fully, kwon2020hyperbolicity}. 
\end{example}

\begin{example}\label{Example:Handlebody}
Let $H$ be a handlebody, with $\Sigma=\bdy H$. Then the double $\calD(H)$ is a connected sum of copies of $S^2\times S^1$. A fully augmented link with reflective symmetry lies on $\bdy H \subset \calD(H)$. 
\end{example}

\begin{example}\label{Example:KnotComplement}
The exterior of an open neighbourhood of a classical knot in the 3-sphere is a well-known example of a nontrivial compact irreducible 3-manifold. We obtain a fully augmented link with reflective symmetry by adding the diagram of a fully augmented link to the torus boundary component of the knot exterior, and taking the double. Observe that such a manifold will typically admit two essential tori, obtained by doubling the boundary of a regular neighbourhood of the original knot.
\end{example}

In this paper, we are most interested in hyperbolic components of a JSJ decomposition of a fully augmented link complement. In the following sections, we derive consequences to geometry in the hyperbolic setting.

%%%%%%%%%%%%%%%%%%%%%%%%%%%%%%%%%%%%%%%%%%%%%%%%%%%%%%%%%%%%%%%%%
\section{Geometry and surfaces}\label{Sec:GeomSurfaces}

%This section describes geometric consequences of the reflective symmetry.

\begin{proposition}\label{Prop:GeodesicReflectiveSurface}
Let $L$ be a fully augmented link with reflective symmetry in $\calD(M)$. Suppose that a component of $L$ lies in a hyperbolic component of the JSJ decomposition of $\calD(M)-L$. Then the reflective surface in the link complement is a totally geodesic hyperbolic surface.

Each crossing disc $D_j-L$ within such a component is also totally geodesic. 
\end{proposition}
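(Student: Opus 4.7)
The plan is to use Mostow--Prasad rigidity to realise the topological reflection $R$ from \refprop{Reflection} as an orientation-reversing involution isometry of the given hyperbolic JSJ piece, and then to apply the fact that the fixed set of such an isometry is a totally geodesic surface. For the crossing discs, whose interiors $R$ does not fix pointwise, I would instead invoke Adams' theorem on essential thrice-punctured spheres in finite-volume hyperbolic $3$-manifolds.

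First I would check that $R$ preserves the hyperbolic JSJ piece $H$ containing the given component of $L$. Since $R$ preserves each crossing disc $D_j$ setwise, it also preserves each crossing circle $C_j$ setwise; away from half twists $R$ fixes each $K_i$ pointwise, and a brief check at half twists using the twist homeomorphism in the proof of \refprop{Reflection} shows that $R$ still preserves the set of link components. Hence $R$ preserves the cusp of any given component of $L$, and by canonicity of the JSJ decomposition $R$ preserves $H$. Applying Mostow--Prasad rigidity to $R|_H$, this orientation-reversing involution is isotopic to a unique isometry $R'$ of $H$, and uniqueness forces $R'$ itself to be an orientation-reversing involution. The fixed set of such an isometry in a finite-volume hyperbolic $3$-manifold is an embedded totally geodesic surface, and an equivariant isotopy identifies the reflective surface in $H$ with this geodesic surface.

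For each crossing disc $D_j$ lying in $H$, the topological observation is that $D_j$ punctured by $\partial D_j = C_j$ and by the two points of $K \cap D_j$ is a properly embedded thrice-punctured sphere in $H$. By a theorem of Adams, every essential, properly embedded thrice-punctured sphere in a finite-volume hyperbolic $3$-manifold is isotopic to a totally geodesic one, unique up to isotopy. Essentiality of $D_j - L$ in $H$ follows because any compressing or $\partial$-compressing disc would contradict the hyperbolic structure of $H$, using that the $K_i$ and $C_j$ are linked in the standard way dictated by \refdef{FALNoTwists}.

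The main obstacle I anticipate is the equivariant isotopy step: showing that the reflective surface itself, and not merely some isotopic totally geodesic surface, is the fixed set of $R'$. This requires Tollefson-style results on involutions of irreducible $3$-manifolds, together with care about the behaviour of cusps and of half twists. The rest of the argument, including the application of Adams' theorem, is essentially standard once this equivariance is established.
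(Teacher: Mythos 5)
Your proposal follows essentially the same route as the paper: realise the reflection as an isometry of the hyperbolic JSJ piece via Mostow--Prasad rigidity, conclude its fixed set is totally geodesic, and handle the crossing discs with Adams' thrice-punctured-sphere theorem. The one concrete ingredient you are missing is how to get the reflection to restrict to an actual self-map of the hyperbolic piece $H$: ``canonicity of the JSJ decomposition'' only gives that $R$ takes the JSJ tori to isotopic tori, so $R(H)$ is isotopic to $H$ but need not equal it. The paper resolves this with the Equivariant Torus Theorem (Holzmann), which lets one isotope the JSJ tori to be $R$-invariant, after which each hyperbolic piece meeting the reflection surface genuinely inherits the reflection, fixing that surface pointwise. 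With that in hand, the ``equivariant isotopy step'' you flag as the main obstacle is exactly what the paper outsources to the literature (Leininger, Menasco--Reid): a hyperbolic piece admitting a reflection fixing a surface pointwise has that surface totally geodesic by rigidity. Your treatment of the crossing discs matches the paper's citation of Adams.
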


\begin{proof}
Since $\calD(M)-L$ admits a reflective symmetry, by the Equivariant Torus Theorem, first proved by Holzmann~\cite{Holzmann}, any essential torus in the JSJ decomposition can be isotoped either to be taken to a distinct disjoint essential torus, or taken to itself by the reflection. Thus when we cut along the essential tori of the JSJ decomposition, any component meeting the reflection surface continues to admit a reflective symmetry, fixing the reflection surface pointwise. If such a component is hyperbolic, then it follows from Mostow--Prasad rigidity that the surface fixed pointwise is totally geodesic in the hyperbolic metric; see, for example~\cite{Leininger, MenascoReid}. 
Finally, each crossing disc $D_j-L$ in $\calD(M)-L$ is a 3-punctured sphere, which has a unique totally geodesic hyperbolic structure~\cite{adams1985thrice}. 
\end{proof}

\begin{lemma}\label{Lem:RightAngles}
Suppose a link component of $L$ lies in a hyperbolic component of the JSJ decomposition of $\calD(M)-L$. Then the reflection surface and the crossing discs $D_i-L$ meet at right angles. 
\end{lemma}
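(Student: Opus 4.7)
The plan is to combine \refprop{GeodesicReflectiveSurface} with the fact, already established in \refprop{Reflection}, that the reflection symmetry $r$ of $\calD(M)-L$ sends each crossing disc $D_j$ to itself (reflecting it through its intersection arc with the reflection surface). So I have two totally geodesic surfaces, the reflection surface $F$ and the crossing disc $D_j-L$, meeting along a geodesic arc $\alpha = D_j\cap F$, and the reflection $r$ fixes $F$ pointwise and preserves $D_j-L$ setwise.

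The core of the proof is then a local linear algebra argument in the tangent space at a point $p\in\alpha$. Since $F$ is totally geodesic and fixed pointwise by the isometry $r$, the differential $dr_p$ is the identity on $T_pF$ and equal to $-1$ on the normal line to $F$; in particular $dr_p$ is an involution with eigenvalues $+1,+1,-1$. Because $r$ preserves $D_j-L$, the 2-dimensional subspace $T_p(D_j-L)\subset T_p(\calD(M)-L)$ is invariant under $dr_p$, so it decomposes into eigenspaces for $dr_p$. It already contains the tangent line to $\alpha$, which lies in $T_pF$ and is a $+1$ eigenvector. The only possibilities for the remaining eigenvalue are therefore $+1$ or $-1$.

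If the second eigenvalue is $+1$, then $T_p(D_j-L)\subset T_pF$, which forces $D_j-L$ and $F$ to be tangent at $p$; by uniqueness of totally geodesic surfaces through a tangent plane this would make them coincide in a neighbourhood of $p$, contradicting the fact that $D_j\cap F$ is a 1-dimensional arc. Hence the second eigenvalue must be $-1$, and $T_p(D_j-L)$ contains the normal direction to $F$. This is exactly the statement that $D_j-L$ meets $F$ orthogonally at $p$, and since $p$ was an arbitrary point of $\alpha$, we conclude $D_j-L\perp F$.

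The only subtle point is checking that the argument really applies in the half-twist case, since there the ambient reflection through $\Sigma$ does not preserve $D_j$ with the correct orientation and must be composed with a twist homeomorphism, as in \refprop{Reflection}. The resulting symmetry $r$ is still an orientation-reversing involution of $\calD(M)-L$ that fixes $F$ pointwise and takes $D_j-L$ to itself, and Mostow--Prasad rigidity applied to each hyperbolic JSJ piece (as in \refprop{GeodesicReflectiveSurface}) promotes $r$ to an isometry; thus the linear-algebraic step goes through uniformly in both cases.
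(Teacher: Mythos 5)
Your proposal is correct and is essentially the paper's own argument: the paper likewise observes that the reflection fixes the reflection surface pointwise and takes each crossing disc to itself, and concludes orthogonality from that. Your eigenvalue computation for $dr_p$ simply makes explicit the step the paper leaves implicit, so no further comment is needed.
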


\begin{proof}
  The reflective symmetry preserves the reflection surface pointwise, but reflects each crossing disc in the arc where the crossing disc meets the reflection surface. Since the crossing disc is taken to itself under the reflection, it must meet the reflection surface at right angles. 
\end{proof}

\begin{lemma}\label{Lem:RectangleCusps}
Let $L_j$ be a component of $L$. Then the torus $\bdy N(L_j)$ is tiled by rectangles. Each rectangle has two opposite sides coming from the intersection of $\bdy N(L_j)$ with the reflection surface; colour these white. The other two sides come from intersections with crossing discs; colour these black. 
\end{lemma}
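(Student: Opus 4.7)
The plan is to analyse each component separately, according to whether $L_j$ is a crossing circle or a knot strand on the reflection surface.

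First I would consider $L_j = C_j$ a crossing circle. By \refdef{FALNoTwists}, every crossing disc meets the link in exactly two points lying on the knot strands, so the only crossing disc meeting $C_j$ itself is its own disc $D_j$, which intersects $\bdy N(C_j)$ in a single longitudinal circle parallel to $C_j$. The intersection $D_j \cap (\text{reflection surface})$ is a single arc (compare \refprop{Reflection}, modified in the half-twist case as in \refdef{LReflectiveSymmetry}), and its two endpoints are the only points of $C_j$ on the reflection surface. Hence the reflection surface meets $\bdy N(C_j)$ in exactly two meridional circles. Cutting the torus by one longitude and two disjoint meridians yields two rectangles, each with two opposite white sides (arcs of the meridional reflection-surface curves) and two opposite black sides (arcs of the longitudinal crossing disc).

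Next I would handle $L_j = K_i$, a knot strand. By \refdef{LReflectiveSymmetry}, $K_i$ lies on the reflection surface (after accommodating any half-twists), so $\bdy N(K_i)$ meets the reflection surface in two parallel longitudinal circles, one on each side of $K_i$ within the surface. Each intersection point of $K_i$ with a crossing disc $D_k$ contributes a meridional circle on $\bdy N(K_i)$. Letting $m_i$ be the total number of such intersection points (equivalently, the count of crossing circles met by $K_i$, with multiplicity), the torus $\bdy N(K_i)$ carries two longitudes and $m_i$ meridians. These subdivide each of the two annuli obtained by cutting along the longitudes into $m_i$ rectangles, giving $2 m_i$ rectangles in total, each with two opposite white longitudinal sides and two opposite black meridional sides.

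The only genuinely subtle point in the argument is the bookkeeping around half-twists. Because the reflection surface is obtained from the projection surface precisely by cutting and regluing along half-twist arcs, both endpoints of the modified arc $D_j \cap (\text{reflection surface})$ still lie on $C_j$, and each $K_i$ still lies entirely in the reflection surface. Everything else reduces to transversality of the reflection surface with the crossing discs, together with the elementary observation that a torus cut by disjoint parallel longitudes and disjoint parallel meridians meeting transversely is tiled by rectangles whose opposite sides are of matching type.
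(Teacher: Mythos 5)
Your proof is correct and takes essentially the same approach as the paper: decompose the cusp torus by its curves of intersection with the reflection surface and the crossing discs, and observe that two transverse families of parallel essential curves of different slopes tile the torus by rectangles with opposite sides of matching colour. Your explicit case analysis (crossing circle versus knot strand, with the count of meridians and longitudes) is more detailed than the paper's two-sentence combinatorial argument and in fact anticipates the proof of \refprop{BoundaryRectangles}, where exactly this bookkeeping is carried out.
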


\begin{proof}
By \reflem{RightAngles}, the reflection surface and the crossing discs meet at right angles in a hyperbolic component of the JSJ decomposition. Even more generally, any link component $L_j$ will meet both crossing discs and the reflection surface, with the intersections of these surfaces meeting $\bdy N(L_j)$ in 4-valent vertices. Thus these two surfaces cut $\bdy N(L_j)$ into rectangles. 
\end{proof}

\begin{lemma}\label{Lem:CirclePattern}
Suppose $L_j$ lies in a hyperbolic component of the JSJ decomposition. Consider the lifts of rectangles of \reflem{RectangleCusps} to $\HH^3$. 
Each rectangle has two opposite white sides and two black sides. The black sides lie over a white circle that is tangent to the two white sides. If we scale such that the length of a black side is one, the length of a white side will be at least one. 
\end{lemma}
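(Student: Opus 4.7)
The plan is to place the cusp $\bdy N(L_j)$ at infinity in the upper half space model of $\HH^3$, identify the relevant lifts of the reflection surface and crossing discs, and then locate the white circle as the boundary at infinity of a hemispherical lift of the reflection surface.

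First I will verify that each rectangle of \reflem{RectangleCusps} is a Euclidean rectangle on the horosphere. By \refprop{GeodesicReflectiveSurface}, together with the fact that each crossing disc is a thrice-punctured sphere with a unique totally geodesic hyperbolic structure, and by \reflem{RightAngles}, the reflection surface and crossing discs are totally geodesic planes meeting at right angles in $\HH^3$. In both cases of interest --- $L_j$ a knot strand lying on $\Sigma$, or $L_j$ a crossing circle piercing $\Sigma$ at two points --- the reflection surface has two cuspidal ends at $L_j$, and every crossing disc contributing a black side of a rectangle has $L_j$ as an ideal vertex. Consequently, all lifts bounding a rectangle are vertical half-planes in the upper half space model, their intersections with the horosphere are straight Euclidean line segments, and each rectangle is Euclidean.

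Second, I will produce the white circle. Fix a single rectangle, with white sides on vertical half-plane lifts $\tilde R_1, \tilde R_2$ of the reflection surface and black sides on vertical half-plane lifts $\tilde D_1, \tilde D_2$ of crossing discs. The planes $\tilde R_1, \tilde R_2$ are parallel, and their Euclidean separation equals the length $d$ of a black side. In the universal cover the reflection surface has, in addition to the vertical half-plane lifts, hemispherical lifts: at each link cusp to which the reflection surface is two-sidedly incident, a second sheet of the reflection surface approaches the same ideal vertex, tangent to the first in the Euclidean sense. Tracing this construction around the rectangle yields a hemispherical lift $H$ of the reflection surface that is tangent to both $\tilde R_1$ and $\tilde R_2$. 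Such an $H$ has Euclidean radius $d/2$, and its boundary circle in $\{z=0\}$ is a Euclidean circle of diameter $d$ tangent to the two boundary lines of $\tilde R_1$ and $\tilde R_2$; this is the white circle of the lemma.

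Third, I will deduce the length inequality from embeddedness. Since $H$ is an embedded lift of the reflection surface, and since the reflection surface meets each crossing disc only along the arc $\alpha_j = D_j \cap \Sigma$, the hemisphere $H$ does not protrude across either $\tilde D_1$ or $\tilde D_2$. Hence the Euclidean distance between $\tilde D_1$ and $\tilde D_2$ --- equal to the length of a white side on the horosphere --- is at least the Euclidean diameter $d$ of $H$, i.e., at least the length of a black side. Rescaling so each black side has length one, each white side has length at least one. The main obstacle I foresee is justifying the existence of the tangent hemisphere $H$: this requires a careful analysis of how distinct sheets of the reflection surface approach the lifted cusps of the link near the cusp of $L_j$ in the universal cover, to confirm that the two tangency conditions (with $\tilde R_1$ and with $\tilde R_2$) are simultaneously realised by a single hemisphere lift of refl surf.
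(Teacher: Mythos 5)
There is a genuine gap, and it sits exactly where you flagged it: you never establish the existence of the hemispherical lift $H$ tangent to both white planes, and ``tracing this construction around the rectangle'' does not produce it. The paper's route to this hemisphere is the ideal triangle decomposition of the crossing discs: each crossing disc minus $L$ is a thrice-punctured sphere, cut by the reflection surface into two totally geodesic ideal triangles, and a black side of the rectangle is the trace of one such black triangle on the horosphere. The triangle's two edges asymptotic to the cusp at infinity lie in the white vertical planes $\tilde R_1$, $\tilde R_2$; its third edge is a geodesic joining the feet of those two vertical edges and, being an arc of $D_j\cap\Sigma$, lies on a lift $W'$ of the reflection surface. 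Since $W'$ is a lift of an embedded totally geodesic surface, distinct from $\tilde R_1$ and $\tilde R_2$ but sharing an ideal point with each, its boundary circle is tangent to the lines $\bdy\tilde R_1$, $\bdy\tilde R_2$ at the two feet, hence has diameter equal to the black side length. This is the construction your step~2 is missing.

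Your step~3 is incorrect even granting the existence of $H$. The hemisphere just described is centred at the midpoint of the foot of the black side, i.e.\ \emph{on} the boundary line of $\tilde D_1$, and it crosses $\tilde D_1$: their intersection is precisely the third edge of the black ideal triangle. The reflection surface genuinely meets the crossing discs, so ``$H$ does not protrude across $\tilde D_1$ or $\tilde D_2$'' is false and cannot follow from embeddedness. The paper instead produces one such circle of diameter one under \emph{each} black side and uses embeddedness of the reflection surface to conclude that these two circles are disjoint; since both are tangent to the two white lines and centred at the two ends of a white side, that side has length at least one. Replacing your containment claim with this disjointness argument (and supplying the ideal-triangle construction above) repairs the proof.
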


\begin{proof}
This follows from the fact that the black crossing discs are cut by the reflection surface into ideal triangles. A black triangle in the rectangle of \reflem{RectangleCusps} meets two vertical white planes in two of its sides. The third side meets a lift of a component of the reflection surface that runs between these two ideal edges on the vertical white planes. Because the reflection surface is totally geodesic, its lift to $\HH^3$ is a geodesic plane in $\HH^3$, which has boundary a white circle tangent to the white vertical planes making up sides of the rectangle. See \reffig{Circles}.

\begin{figure}
  \includegraphics{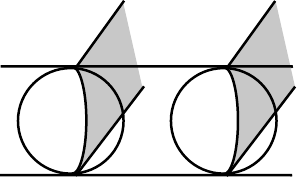}
  \caption{Rectangle with circle white sides}
  \label{Fig:Circles}
\end{figure}

Finally, observe that if we scale so that a black side has length one, then each circle tangent to the white sides has diameter one. Because the reflection surface is embedded, these two circles will be disjoint. Thus the length of the white side is at least one. 
\end{proof}

As a converse to \reflem{CirclePattern}, in~\cite{FuchsPurcellStewart} a hyperbolic fully augmented link is constructed by starting with a circle packing on $\bdy M$ that is hyperbolisable and gluing circles appropriately. 

%%%%%%%%%%%%%%%%%%%%%%%%%%%%%%%%%%%%%%%%%%%%%%%%%%%%%%%%%%%%%%%%%
\section{Cusp shapes and cusp areas}
This section presents results on cusps of fully augmented links in $\calD(M)$. The results generalise similar results in the classical case; see for example \cite{FuterPurcell:Exceptional}, sections~2.2 and~2.3.

\begin{proposition}\label{Prop:BoundaryRectangles}
Suppose $M$ is a compact orientable 3-manifold with boundary $\bdy M$. Suppose $L$ is a fully augmented link with reflective symmetry, lying on $\Sigma\subset \bdy M$ in the double $\calD_{\Sigma}(M)$, and let $L_j$ be a component of $L$.
\begin{enumerate}
\item If $L_j$ comes from a crossing circle, $\bdy N(L_j)$ meets exactly two rectangles of \reflem{RectangleCusps}.
\item If $L_j$ is a strand of the knot embedded in the reflection surface, $\bdy N(L_j)$ meets $2\times n_j$ such rectangles, where $n_j$ is the number of times $L_j$ runs through a crossing circle (counted with multiplicity). 
\end{enumerate}
\end{proposition}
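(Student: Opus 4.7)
The plan is to describe the intersection pattern of the reflection surface and the crossing discs with each torus $\partial N(L_j)$ explicitly, and then count rectangles from the resulting curve system.

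For part (1), with $L_j = C_j$ a crossing circle: The only crossing disc meeting $N(C_j)$ is $D_j$, since distinct crossing circles bound disjoint crossing discs. The disc $D_j$ intersects $\partial N(C_j)$ in a single closed curve, namely $\partial (D_j - N(C_j))$, which is a longitude of the solid torus $N(C_j)$, contributing one black curve. By \refprop{Reflection}, the reflection surface meets $D_j$ transversely in a single arc $\alpha$ with both endpoints on $C_j$. Choosing $N(C_j)$ small, the intersection of the reflection surface with $N(C_j)$ consists of two small transverse disc neighborhoods of the endpoints of $\alpha$, and hence the reflection surface meets $\partial N(C_j)$ in two disjoint meridians of $N(C_j)$, giving two white curves. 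These three simple closed curves meet transversely in two points, so by an Euler characteristic count ($V=2$, $E=4$, hence $F=2$), they decompose $\partial N(C_j)$ into exactly two rectangles, each with two opposite white sides and two opposite black sides.

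For part (2), with $L_j$ a knot strand embedded in the reflection surface: Because $L_j$ lies in the reflection surface, the reflection surface meets $\partial N(L_j)$ in two parallel longitudes (one on each side of $L_j$ in the surface), giving two white curves. Each of the $n_j$ intersections of $L_j$ with a crossing disc contributes one meridian of $\partial N(L_j)$ from the corresponding disc, yielding $n_j$ black meridians. The two white longitudes cut the torus into two annuli, and each annulus is in turn cut by the $n_j$ meridians into $n_j$ rectangles, for a total of $2 n_j$ rectangles.

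The main subtlety I anticipate is handling half twists. Near a half-twisted crossing disc $D_j$, the reflection surface no longer coincides with $\Sigma$; however, \refprop{Reflection} guarantees that the reflection surface still meets $D_j$ in a single arc through both puncture points and still contains the cut-and-reglued knot strands. Consequently, the intersection patterns on $\partial N(L_j)$ and the rectangle counts above are insensitive to the presence of half twists, and the two cases give the stated counts in full generality.
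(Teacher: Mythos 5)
Your proof is correct and follows essentially the same route as the paper: identify the curves in which the reflection surface and the crossing discs intersect each cusp torus, and count the complementary regions (the paper counts annuli cut into rectangles where you use an explicit Euler characteristic count). One small imprecision: at a half-twisted crossing circle the reflection surface need not meet $\bdy N(C_j)$ in two disjoint meridians --- the white arcs can close up into a single curve of a different slope, which is exactly the shearing recorded in \refthm{CuspShapesMain}(1) --- but since the reflection surface still meets $D_j$ in a single arc with two endpoints on $C_j$, there are still exactly two white--black intersection points on the cusp torus, so your Euler characteristic count (and hence the conclusion) goes through unchanged.
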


\begin{proof}
A crossing circle meets exactly one crossing disc $D_j$, which cuts $\bdy N(L_j)$ into an annulus with two black boundary components, coming from where the cusp meets the black side.
If it does not enclose a half-twist, the crossing circle also meets the reflection surface exactly twice in two meridians, which cuts the annulus into two rectangles. If it meets a half-twist, it is still the case that one encounters two rectangles when traversing the boundary of a crossing disc: the second is the reflected copy of the first in the double. 

As for a component in the reflection surface, note that the reflection surface meets $\bdy N(L_j)$ twice, cutting it into two annuli, which are swapped by the reflective symmetry. Each time the link component runs through a crossing circle, it meets a crossing disc. For each annulus, it therefore meets $n_j$ crossing discs, dividing each annulus into $n_j$ rectangles. Because there are two annuli, the total number of rectangles is $2n_j$. 
\end{proof}

When there are no half-twists in the diagram, the rectangles of \refprop{BoundaryRectangles} form fundamental domains for the torus $\bdy N(L_j)$ without shearing. When there are half-twists, the fundamental domains are sheared:

\begin{lemma}\label{Lem:HalfTwistShear}
Let $D_j$ denote the crossing disc bounded by a crossing circle $C_j$. Then adding a half-twist at $C_j$ is realised by a homeomorphism that cuts along the 3-punctured disc $D_j$, and reglues after a half-twist. This glues the top half of the disc on one side to the bottom half on the other, and vice versa. Thus it introduces a shearing. 
\end{lemma}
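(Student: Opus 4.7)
The plan is to unpack the cut-and-reglue operation defining a half-twist in \refdef{FAL} and to track its effect on a fundamental domain for $\bdy N(C_j)$. First I would cut $\calD(M)-L$ along the thrice-punctured disc $D_j - L$, producing two parallel copies $D_j^+$ and $D_j^-$; the original complement is recovered by regluing them via the identity.

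Next I would identify the half-twist regluing as a concrete homeomorphism of the disc. The condition in \refdef{FAL} that the two knot-strand punctures be swapped, combined with fixing $C_j = \bdy D_j$ pointwise, determines the regluing up to isotopy as the half-Dehn-twist on a twice-punctured disc --- rotation by $\pi$ about the centre. The reflection surface meets $D_j$ in an arc connecting the two knot-strand punctures, and this arc divides $D_j$ into an upper and a lower half. Rotation by $\pi$ exchanges these two halves, so it takes the upper half of $D_j^+$ to the lower half of $D_j^-$ and vice versa; this is the gluing description in the statement.

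Finally I would transfer the above picture to the boundary torus of $C_j$. By \refprop{BoundaryRectangles}, $\bdy N(C_j)$ is tiled by two rectangles, one from each copy of $M$ in the double. Cutting along $D_j$ separates each rectangle into a top and a bottom half along the white arc coming from the reflection surface. Without a half-twist, the identity regluing reassembles each rectangle from its own top and bottom, yielding the standard rectangular fundamental domain for the torus. With the half-twist, the top of $D_j^+$ is instead identified with the bottom of $D_j^-$, and the resulting fundamental domain becomes a parallelogram obtained from the rectangle by shifting the top edge relative to the bottom --- precisely the shearing claimed.

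The main obstacle is the second step: matching the abstract cut-and-reglue of \refdef{FAL} with the concrete half-Dehn-twist of a twice-punctured disc. This is essentially the standard picture of a single crossing in a 2-braid diagram, but verifying that the two descriptions agree up to isotopy (rel $\bdy D_j$) requires a little care with orientations of the strands and the choice of rotation direction; this is what justifies calling the operation a \emph{half-twist} in the first place.
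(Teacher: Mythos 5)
Your argument is correct and reaches the same conclusion as the paper, but you do considerably more work than the paper does: the paper's entire proof is to observe that in the hyperbolic case the regluing is realised by an isometry of the thrice-punctured sphere, citing Adams and Figure~9 of Purcell's survey for the picture, and to remark that the combinatorics is identical in the nonhyperbolic case. Your version derives that picture directly --- identifying the regluing with the rotation by $\pi$ of the twice-punctured disc, noting that the arc $D_j\cap\Sigma$ carries both punctures and separates $D_j$ into the two halves that the rotation exchanges, and then reading off the shear on $\bdy N(C_j)$ as a parallelogram fundamental domain. That is a more self-contained route and buys independence from the citation. One caveat on your second step: swapping the two punctures while fixing $\bdy D_j$ pointwise does \emph{not} determine the mapping class --- the relevant mapping class group is infinite cyclic generated by the half-Dehn-twist, so such maps are exactly the odd powers of it. The identification with a \emph{single} half-twist comes from the paper's definition of the operation (the explicit cut-and-reglue of Figure~\ref{Fig:HalfTwist}), not from the puncture-swapping condition alone; with that adjustment your argument is sound.
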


\begin{proof}
When the component is hyperbolic, the gluing is by isometry of the 3-punctured disc; see Adams~\cite{adams1985thrice} or Purcell~\cite{Purcell:AugLinksSurvey}, particularly Figure~9 in that paper. Even in the nonhyperbolic case, the combinatorics of the gluing is as stated, and the effect is the same.
\end{proof}

Choose any orientation on the components of the link that lie in the reflection surface. Let $C_j$ be a crossing circle, bounding crossing disc $D_j$. Consider one of the two strands of $L$ that run through $C_j$. By \refprop{BoundaryRectangles} (or perhaps more accurately its proof), locally the disc $D_j$ subdivides a horospherical torus into four rectangles: A rectangle $B_1$ lying on one side of the reflection plane (say above), glued by reflection to a rectangle $B_1'$ below the reflection plane, and a rectangle $B_2$ above the reflection plane on the opposite side of $D_j$ glued to a rectangle $B_2'$ below. Depending on how the strands of the link connect in the reflection surface, a half-twist will have one of the following effects. 

\begin{lemma}\label{Lem:HalfTwistShearKnotStrand}
Adding a half-twist to the diagram at a crossing circle $C_j$ has the following effect on the cusp tiling.
\smallskip

\noindent(1) Suppose both link components passing through $C_j$ belong to the same component $K$, and run in the same direction. Let $B_1$, $B_1'$ and $B_2$, $B_2'$ denote subrectangles meeting the crossing disc $D_j$ the first time the link component runs through, and $B_3$, $B_3'$, $B_4$, $B_4'$ denote subrectangles the second time. 
Then after inserting a half-twist, $K$ is split into two disjoint link components. The cusp for one component is tiled by $B_1$, $B_1'$ and $B_4$, $B_4'$, the other by $B_2$, $B_2'$, $B_3$, $B_3'$, with the gluing shifted up or down along the black faces corresponding to the crossing disc $D_j$, depending on the direction of the half-twist. See \reffig{BoundaryRectangle}.

\smallskip

\noindent(2) Suppose both link components passing through $C_j$ belong to the same component $K$ running through in opposite directions. Then there is still one link component after adding a half-twist at $C_j$, but the gluing shifts by one step along the black faces corresponding to $D_j$ each time $K$ passes through $C_j$. The rectangular block between the two points where $L$ passes through $C_j$ will be reversed. See \reffig{BoundaryRectangle}.
\smallskip

\noindent(3) Suppose the two link components running through $C_j$ belong to two different link components $J,K\subset L$. Then after adding a half-twist, the resulting cusp torus is the concatenation of the two blocks with gluing shifted along the black faces where they join. See \reffig{BoundaryRectangle}. 
\end{lemma}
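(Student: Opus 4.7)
The plan is to work locally near $D_j$ using \reflem{HalfTwistShear}, which realises the half-twist as a homeomorphism that cuts along the 3-punctured disc $D_j$ and reglues after a $\pi$ rotation. The key observation is that this $\pi$ rotation permutes the two punctures where $L$ meets $D_j$. Consequently, on the cusp tori meeting $D_j$, the half-twist has two combined effects: it introduces a ``top to bottom'' shearing along the black sides coming from $D_j$, and it cross-identifies the two meridional traces of $D_j$, so that what was glued straight across $D_j$ at the first passage of $L$ through $C_j$ is now glued (with the shift) across to the neighbourhood of the second passage, and vice versa.

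Granted this local description, the three cases follow from tracking how the rectangles on the cusp tori reassemble under the new identifications, while the identifications coming from the reflection surface (and from any other crossing discs) remain untouched. For case (1), the two strands through $C_j$ are parallel, so the rerouting sends $B_1$ across $D_j$ with the shift to $B_4'$. Continuing: $B_4'$ identifies with $B_4$ through the reflection surface, $B_4$ identifies with $B_1'$ across $D_j$ by the shifted rerouting, and $B_1'$ returns to $B_1$. Hence $\{B_1, B_1', B_4, B_4'\}$ forms one cusp torus, and by the symmetric argument $\{B_2, B_2', B_3, B_3'\}$ forms the other. The split of the cusp tori matches the splitting of $K$ into two disjoint link components.

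For case (2), the opposite orientation at the second passage reverses the labelling of $B_3, B_4$ relative to the parallel situation, so the cross-identification at $D_j$ now sends $B_1$ through the reversed rectangular block between the two passages of $K$ and connects back to $B_2$, keeping the cusp torus connected. The shear at each of the two passages through $C_j$ is inherited directly from the two instances of the shifted rerouting at $D_j$. For case (3), the two meridional traces of $D_j$ lie on distinct cusp tori $\bdy N(J)$ and $\bdy N(K)$; cross-identifying them joins the two tori into a single cusp, producing the stated concatenation of the two local blocks with shear along the black sides where they meet.

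The main obstacle is case (1), where one must verify that the combinatorial reassembly really produces two disjoint cusp tori rather than a single torus with altered tiling. This boils down to keeping careful track of the orientations of the two parallel strands through $C_j$ so that the cross-identification aligns with the cyclic orders on the cusp in a way that separates it into two pieces. The remaining cases reduce to local variants of the same bookkeeping, with case (3) essentially the reverse of case (1).
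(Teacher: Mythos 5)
Your proposal is correct and follows essentially the same route as the paper: both realise the half-twist locally via the cut-and-reglue homeomorphism of \reflem{HalfTwistShear}, observe that it swaps the two punctures of $D_j$ and hence cross-identifies the blocks meeting $D_j$ with a shear along the black sides, and then track the reassembly of the rectangles case by case (with the same gluings, e.g.\ $B_1$ to $B_4'$ and $B_4$ to $B_1'$ in case (1)). Your explicit four-step cycle verifying that case (1) yields two disjoint cusp tori is a slightly more careful presentation of the same bookkeeping.
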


\begin{figure}
  \centering
  \import{figures/}{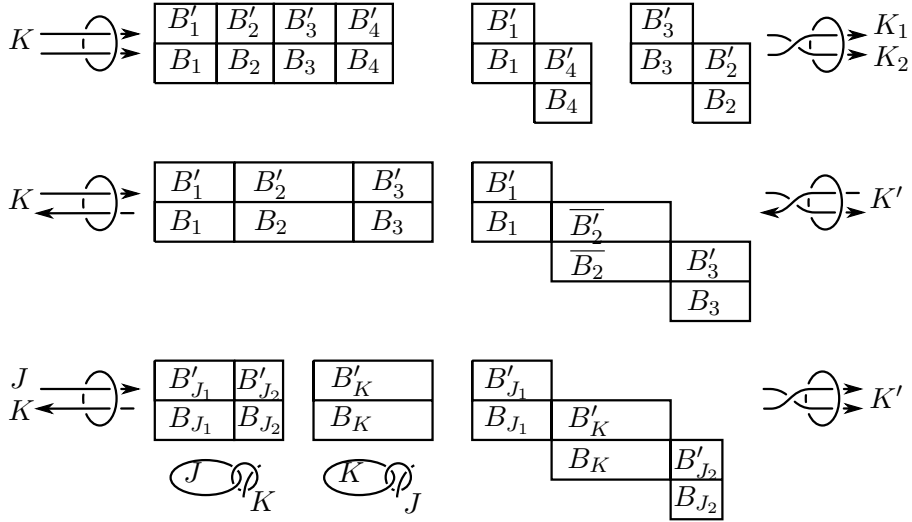}
  \caption{How the tiling of horospherical cusp tori changes when adding a half-twist at a crossing circle.}
  \label{Fig:BoundaryRectangle}
\end{figure}

\begin{proof}
When both strands passing through $C_j$ run in the same direction and belong to the same component $K$, adding a half-twist at $C_i$ will correspond to splitting $K$ into two components $K_1$ and $K_2$. The block of boundary rectangles $B_1$ will now be glued to $B_4'$, the block corresponding $B_4$ will be glued to $B_1'$. This corresponds to shifting the gluing up or down depending on the direction of the half-twist. The tail of $B_4$ is glued to the head of $B_1$ with no change. Similarly the tail of $B_3$ will be glued to the head of $B_2'$.
		
If the strands pass through $C_j$ in opposite directions for the same link component, adding a half-twist will not increase the number of components, but it will change the direction of the strand on one side of $C_j$. Thus adding a half-twist at $C_j$ reverses the block of boundary rectangles $B_2$; denote this by $\overline{B_2}$, and similarly $\overline{B_2'}$. The new cusp tiling is given by $B_1$ glued to $B_1'$ across the white reflection side, and to $\overline{B_2'}$ across the black side. Similarly $\overline{B_2}$ is glued to $B_3'$. 
		
In the case that there are two different components $J$ and $K$ passing through $C_j$, we can always orient in the same direction when passing though $C_j$. Let $B_1$ and $B_2$ be the two blocks of boundary rectangles which tile the cusp of $J$ and meet $C_j$ between $B_1$ and $B_2$, and let $B_3$ be the block of boundary rectangles which tile $K$ meeting $C_j$ at its head and tail. Then adding a half-twist yields a single component tiled by gluing the tail of $B_1$ to the head of $B_3'$ and the tail of $B_3$ to $B_2'$, shifted up or down depending on the direction of the half-twist.
\end{proof}

Finally we include results about horoball sizes in the hyperbolic case. The previous lemmas, along with \reflem{CirclePattern}, give information about the Euclidean structure of a cusp corresponding to $\bdy N(L_j)$ for $L_j$ in a hyperbolic component of the JSJ decomposition of $\calD(M)-N(L)$. However, this is only determined up to scale. By fixing horoballs we are able to fix the area, or scaling, of the cusp as well.

The following proposition is an almost immediate generalisation of a similar result in the classical case, with proofs almost identical to those in \cite[Section~3.2]{FuterPurcell:Exceptional}; see also \cite[Chapter~7]{Purcell:HyperbolicKnotTheory}. There, it was assumed throughout that the complement consisted of hyperbolic polyhedra, and the proofs refer to the polyhedra. Here, we do not have ideal polyhedra. However, we step through the proof to show that the key point is not the polyhedra, but the reflective symmetry.

\begin{definition}\label{Def:Midpoint}
Let $e$ be an edge of a hyperbolic ideal triangle with ideal vertices $v_1$, $v_2$, $v_3$, where the endpoints of $e$ lie on $v_1$ and $v_2$. The \emph{midpoint} of $e$ is defined to be the point where a geodesic from the third ideal vertex $v_3$ meets $e$ at a right angle. 
\end{definition}

\begin{proposition}\label{Prop:CanonicalEdge}
Let $L$ be a hyperbolic fully augmented link in a doubled manifold $\calD(M)$. Then there exists a horoball expansion about the cusps of $\calD(M)-L$ such that the midpoint of every edge is a point of tangency of the horospherical tori.
\end{proposition}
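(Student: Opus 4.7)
The plan is to mirror the classical argument, using the $3$-punctured sphere structure on each crossing disc $D_j - L$ (which is totally geodesic by \refprop{GeodesicReflectiveSurface}) together with the rectangle tiling from \reflem{RectangleCusps} and the circle pattern from \reflem{CirclePattern}. First I would recall that on a $3$-punctured sphere there is a canonical horoball expansion in which the three cusp horoballs are pairwise tangent: placing the cusps at $0, 1, \infty$ in the upper half-space model, the horoballs are $\{z \geq 1\}$ about $\infty$ and Euclidean balls of radius $\tfrac12$ about $0$ and $1$, with tangency points $(0,0,1)$, $(1,0,1)$, and $(\tfrac12, 0, \tfrac12)$ sitting on the three edges. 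A direct computation, exactly as in \refdef{Midpoint}, shows that each tangency point is the foot of the geodesic perpendicular dropped from the third ideal vertex, i.e.\ the midpoint of the edge.

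Next, I would define the horoball expansion on $\calD(M)-L$ by requiring that its restriction to every crossing disc $D_j - L$ is the canonical expansion of the previous paragraph. For a cusp $L_i$ arising from a crossing circle, this is unambiguous since $L_i$ bounds the unique crossing disc $D_j$, so its horoball size is fixed by that single $3$-punctured sphere. The issue of well-definedness only arises for a knot-strand cusp $L_i$, which may meet several crossing discs $D_{j_1}, \dots, D_{j_{n_i}}$; these all contribute horocycles on $\bdy N(L_i)$ and one must check that the canonical-expansion choice made on each $D_{j_k}$ prescribes the same size at $L_i$.

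This consistency is the main obstacle, and it is handled by the cusp-tiling analysis. By \reflem{RectangleCusps} the horospherical torus $\bdy N(L_i)$ is tiled by rectangles whose black sides come precisely from the intersections with the crossing discs $D_{j_k}$. In each $D_{j_k}$, the canonical horoball expansion makes the horocycle around the $L_i$-cusp have a fixed length (in the coordinates used in Step 1 above, the black side is a segment of length $1$). By \reflem{CirclePattern}, scaling so that black sides have length one is a global, coherent choice on $\bdy N(L_i)$: every black side has length equal to one simultaneously, so the horoball sizes prescribed by the various $D_{j_k}$ agree, and the effect of \reflem{HalfTwistShear} and \reflem{HalfTwistShearKnotStrand} is only to shear the tiling along black sides, which preserves black-side length. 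Hence the horoball neighbourhoods $H_{L_i}$ are well-defined.

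Finally, I would verify the midpoint property. Every edge of the polyhedral decomposition of $\calD(M)-L$ obtained by cutting along the reflection surface and the crossing discs lies in some crossing disc $D_j - L$, since edges are intersections of the reflection surface with crossing discs (by \reflem{RightAngles} these meet orthogonally, producing the edges). By construction, on each $D_j - L$ our horoball choice coincides with the canonical pairwise-tangent expansion, and by Step 1 the tangencies of that expansion occur precisely at midpoints. Therefore the midpoint of every edge is a tangency point of the horospherical tori, completing the proof.
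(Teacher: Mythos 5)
There is a genuine gap: you never verify that the horoball neighbourhoods you prescribe are actually embedded, i.e.\ that distinct lifts of the cusp horoballs have disjoint interiors. Declaring that the restriction of the expansion to each crossing disc $D_j-L$ is the canonical pairwise-tangent one fixes the \emph{sizes} of the horoballs, and your consistency check (that the sizes demanded by the various discs meeting a single knot-strand cusp agree) only controls pairs of horoballs that are adjacent along a crossing disc. It says nothing about all the other pairs of lifts, which could a priori already overlap at the prescribed size --- in which case the midpoints would be interior intersection points rather than tangencies, and the ``expansion'' would not be a legitimate horoball neighbourhood at all. This embeddedness is precisely the content of the paper's proof: one expands all horoballs uniformly toward the midpoints, supposes a premature tangency occurs, normalises so that one of the tangent horoballs is at infinity with black sides of length one, and derives that the other horoball $H$ has Euclidean diameter strictly greater than one. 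Its centre cannot lie in the interior of a rectangle (reflecting in a white side would produce two overlapping horoballs), so it lies on a white side; then the circle pattern of \reflem{CirclePattern} and the fact that the embedded reflection surface forces the tangent white circle $W'$ to have diameter at most one imply that $H$ contains an edge midpoint in its interior, contradicting the expansion rule. None of this appears in your argument, and it is the essential step.

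A secondary point: you cite \reflem{CirclePattern} for the claim that scaling one black side to length one makes \emph{every} black side at that cusp have length one simultaneously, but that lemma only asserts that white sides then have length at least one; the simultaneous normalisation of all black sides is exactly equivalent to the horoball passing through all midpoints at that cusp, so as stated your consistency step is close to assuming what needs to be proved. Your Step 1 (the canonical expansion on a totally geodesic $3$-punctured sphere is pairwise tangent at midpoints) is correct and is consistent with how the paper uses the $3$-punctured sphere geometry, but the proposal needs the embeddedness argument to be a proof.
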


\begin{proof}
Lift a fixed cusp to the point at infinity in the universal cover $\HH^3$ such that a rectangle of \reflem{RectangleCusps} has black side length exactly one. There are two black ideal triangles in the rectangle. For each, the midpoint is at Euclidean height one.

Now expand horoball neighbourhoods of the cusps of $L$ uniformly, so that each has the same distance from the midpoint along triangle edges. If the horoballs expand to distance zero from the midpoint, or Euclidean height one, we are finished. So suppose this is not possible. Then the horoballs at the corners of rectangles have diameter strictly less than one, but there must exist a tangency of these horoballs elsewhere. Conjugate so that one of the tangent horoballs is at infinity. Scaling so that black sides of rectangles have length one, the other horoball $H$ will have diameter strictly greater than one.

Consider the centre of $H$. First, note that it cannot lie in the interior of the rectangle, for because its diameter is strictly greater than one and the black side of the rectangle has length one, in that case $H$ would meet a vertical plane corresponding to a white side of the rectangle in a closed curve. But then the reflection in the white side would take $H$ to a horoball intersecting $H$, contradicting the fact that we expanded horoballs keeping them embedded. 

So $H$ must have centre $p$ on a white side $W$ of the rectangle, in the interior of the white side. The point $p$ must project to a point on $L$ (since we have only expanded horoballs about $L$ so far). Thus $H$ meets white and black faces in rectangles. In particular, the white side of the rectangle will be tangent to some other white circle $W'$ at $p$, with these two white planes forming opposite sides of a rectangle at $p$. Because the reflection surface is embedded, and lifts to white circles, this white circle $W'$ is disjoint from the other white circles and lines on $\bdy_\infty\HH^3$; in particular it has diameter at most one. Additionally, there are two black triangles meeting the point $p$ that form the black sides of the rectangle at $p$. Recall that we have expanded the horosphere $H$ about $p$ so that it does not meet the midpoint of each of these black triangles.

\begin{figure}
  \includegraphics{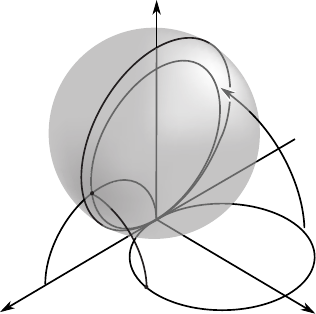}
  \caption{If $H$ is centred at a point $p$ on a white face $W$, and $H$ has diameter greater than one, then it must contain the midpoint of an edge through $p$. This is Figure~7.17 of~\cite{Purcell:HyperbolicKnotTheory}}
  \label{Fig:Horosphere}
\end{figure}

One of the ideal edges of a black rectangle at $p$ lies on the intersection of $W$ with a geodesic plane corresponding to a black triangle. This is a semicircle on $W$. The midpoint of this edge is obtained by dropping a perpendicular line from the semicircle to a point on the circle $W'$. But the fact that the diameter of $H$ is greater than one and the diameter of $W'$ is at most one means that any perpendicular from $W'$ to $W$ lies in the interior of $H$, and thus $H$ contains the midpoint in its interior. See \reffig{Horosphere}. This is a contradiction. So there is no such horosphere $H$, and horospheres can be expanded to midpoints of black triangles while their interiors remain embedded. In this expansion, black sides have length exactly one. 
\end{proof}

\begin{proof}[Proof of \refthm{CuspShapesMain}]
The fact that cusps are tiled as in~(1) and~(2) of the statement of the theorem follows from \refprop{BoundaryRectangles}, with shearing as in  \reflem{HalfTwistShear} and \reflem{HalfTwistShearKnotStrand}. The side lengths follow from \refprop{CanonicalEdge}.
\end{proof}

%%%%%%%%%%%%%%%%%%%%%%%%%%%%%%%%%%%%%%%%%%%%%%%%%%%%%%%%%%%%%%%%%
\section{Lower bounds on volume}

We now turn to the proof of Theorem~\ref{Thm:VolumeMain}. It begins similarly to the classical setting. The key feature we need is the existence of totally geodesic surfaces, using the following theorem from Miyamoto~\cite{Miyamoto}. 

\begin{theorem}[Miyamoto] \label{Thm:Miyamoto}
  A hyperbolic 3-manifold $M$ with totally geodesic boundary has volume
  \[\vol(M) \geq -v_{oct}\,\chi (M)\]
  where $v_{oct}=3.66386...$ is the volume of a hyperbolic regular ideal octahedron. The bound is an equality if and only if $M$ decomposes into regular ideal octahedra. \qed
\end{theorem}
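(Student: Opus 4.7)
The plan is to prove Miyamoto's theorem by a Böröczky-style density argument adapted to hyperbolic 3-manifolds with totally geodesic boundary, carried out in four steps.

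First I would reduce to a volume-to-area comparison on $\bdy M$. For a compact orientable 3-manifold, Poincaré--Lefschetz duality yields $\chi(\bdy M) = 2\chi(M)$, and when $\bdy M$ is totally geodesic the Gauss--Bonnet theorem applied componentwise gives $\area(\bdy M) = -2\pi\chi(\bdy M) = -4\pi\chi(M)$. Thus the stated inequality $\vol(M) \ge -\voct\,\chi(M)$ is equivalent to
\[
\vol(M) \;\ge\; \frac{\voct}{4\pi}\,\area(\bdy M).
\]
The ratio $\voct/(4\pi)$ is exactly the volume-to-base-area ratio of half a regular ideal octahedron, obtained by cutting along the equatorial totally geodesic ideal square (which has area $2\pi$ by Gauss--Bonnet). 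This reformulation isolates the extremal configuration up front.

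Next I would build a Kojima-type canonical decomposition of $M$ relative to the boundary. In the universal cover, $\widetilde{M}\subset\HH^3$ is the intersection of the half-spaces bounded by the lifts $\{P_\alpha\}$ of components of $\bdy M$, with horoballs about cusps carved out if $M$ is noncompact. For each $x\in\widetilde{M}$ I would assign the nearest bounding plane or horoball, yielding a Voronoi-type partition into cells $V_\alpha$ fibred over each $P_\alpha$. The partition is equivariant under the deck group and descends to a cellulation of $M$ in which each cell sits above one boundary component, using Fermi coordinates $dV = \cosh^2(r)\,dA\,dr$ normal to $P_\alpha$.

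The core step is a pointwise density inequality on each cell,
\[
\vol(V_\alpha) \;\ge\; \frac{\voct}{4\pi}\cdot\area(V_\alpha\cap P_\alpha),
\]
with equality iff $V_\alpha$ is a half regular ideal octahedron. To prove it, I would subdivide $V_\alpha$ into orthoschemes whose bases lie on $P_\alpha$ (apex either at an interior point or at an ideal point representing a neighbouring cusp) and apply a sharp volume estimate for hyperbolic orthoschemes in terms of basal area. Integrating and summing over $\alpha$ yields the global inequality; in the equality case each $V_\alpha$ is a half regular ideal octahedron and adjacent cells across $\bdy M$ reconstitute full regular ideal octahedra, so that $M$ decomposes into such octahedra as claimed.

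The main obstacle is the sharp orthoscheme density estimate. Hyperbolic orthoscheme volumes are transcendental, expressed via the Lobachevsky function $\Lambda$, so the inequality is not a soft convexity bound but a delicate dilogarithm estimate. One would need either a symmetrisation argument, deforming an orthoscheme through orthoschemes with the same basal area while strictly decreasing volume until it agrees with the one cut out of a regular ideal octahedron, or a Schl\"afli-formula variational argument identifying and verifying the critical point. This is precisely where Miyamoto's technical work is concentrated, and where any serious proof would have to reproduce his estimates on the Lobachevsky function.
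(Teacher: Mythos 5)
The paper does not prove this statement: it is quoted verbatim from Miyamoto's work and stamped with \verb|\qed| as an imported black box, so there is no ``paper's own proof'' to compare against. Your proposal therefore has to stand or fall on its own as a reconstruction of Miyamoto's argument.

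Your reductions are correct and match the known proof's framing: $\chi(\bdy M)=2\chi(M)$ for a compact $3$-manifold, Gauss--Bonnet gives $\area(\bdy M)=-4\pi\chi(M)$, and the half regular ideal octahedron over its equatorial ideal square (area $2\pi$, volume $\voct/2$) realises the critical ratio $\voct/(4\pi)$, so the extremal configuration is correctly identified. The Kojima-style cut-locus/Voronoi decomposition relative to the totally geodesic boundary is also the right organising device. But the proposal has a genuine gap exactly where you say it does: the cellwise inequality $\vol(V_\alpha)\geq \frac{\voct}{4\pi}\area(V_\alpha\cap P_\alpha)$ is the entire content of the theorem, and ``subdivide into orthoschemes and apply a sharp volume estimate'' is a placeholder, not an argument. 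The difficulty is not only the Lobachevsky-function estimates you flag; it is also that the Voronoi cells over a boundary plane need not decompose into orthoschemes with bases on $P_\alpha$ in any controlled way (cells can be unbounded, can abut cusps, and their combinatorics is not locally finite in an obvious sense), and a naive per-orthoscheme density bound with the constant $\voct/(4\pi)$ is false without the global rigidity that forces tangency patterns as in a horoball/hyperball packing argument of B\"or\"oczky type. Since the sharp constant and the characterisation of equality both live entirely inside this deferred step, the proposal is a plausible roadmap rather than a proof. For the purposes of this paper that is immaterial --- the theorem is used as a citation --- but as a proof attempt it is incomplete.
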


The reflection surface $R$ for $\calD(M)-L$ is totally geodesic, by \refprop{GeodesicReflectiveSurface}. If we cut along this surface, Miyamoto's \refthm{Miyamoto} applies to show that the volume is at most $-\voct\chi\left( (\calD(M)-L)\cut R\right)$.

It remains to bound $\chi\left((\calD(M)-L)\cut R\right)$. In the classical setting, the Euler characteristic is obtained by drilling tubes from a ball; see~\cite[Proposigion~3.1]{FKP:DehnFilling}. In the setting of doubled manifolds, we consider more carefully the construction.

\begin{lemma}\label{Lem:VolEulerChar}
  The Euler characteristic $\chi\left((\calD(M)-L)\cut R\right)$ satisfies:
  \[ \chi((\calD(M)-L)\cut R) = 2(\chi(M)-c)
  \] 
\end{lemma}

\begin{proof}
Cut $\calD(M)-L$ along $R$. This cuts each crossing disc into two pieces. Further cut along these half crossing discs.

If there are no half-twists, the surface $R$ is the surface $\bdy M$, and cutting along $R$ splits $\calD(M)-L$ into two copies of $M$ with $L\cap M$ drilled out.

If there are half-twists, these differ from the link without half-twist only by adjusting the gluing of the half-discs. Hence cutting along half crossing discs again splits the manifold into two copies of $M$ with $L\cap M$ drilled, further cut along half crossing discs.

In both cases, after cutting along $R$ and crossing discs, the result is homeomorphic to two copies of $M$. The link $L$ only marks the boundary $\bdy M$, and does not affect Euler characteristic of the cutting. Thus the cut manifold has Euler characteristic $2\chi(M)$.

Regluing a pair of half crossing discs has the topological effect of adding a 1-handle, in both cases with or without half-twists. Since each crossing circle meets two half-discs, after regluing, the total effect on Euler characteristic is to subtract $2c$.
Thus $\chi((\calD(M)-L)\cut R) = 2(\chi(M)-c)$.
\end{proof}

\begin{proof}[Proof of Theorem~\ref{Thm:VolumeMain}]
Miyamoto's \refthm{Miyamoto} implies that the volume is at most $-\voct\chi\left((\calD(M)-L)\cut R\right)$, which is $-2\voct(\chi(M)-c)$ by \reflem{VolEulerChar}. 
\end{proof}

\begin{corollary}\label{Cor:LowerBoundHandlebody}
Suppose $M=H_g$ is a handlebody of genus $g$, and $L$ is a hyperbolic fully augmented link in $\calD(H_g)$. Then the volume satisfies
\[\vol(\calD(H_g)-L) \geq 2\,\voct\,(g+c-1). \]
\end{corollary}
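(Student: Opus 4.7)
The plan is to simply apply \refthm{VolumeMain} with $M = H_g$ and compute the Euler characteristic. Recall that \refthm{VolumeMain} gives
\[\vol(\calD_\Sigma(M) - L) \geq 2\voct(c - \chi(M)),\]
so the entire content of the corollary is the identity $\chi(H_g) = 1 - g$.

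First, I would verify that the hypotheses of \refthm{VolumeMain} are met: a handlebody $H_g$ is a compact orientable 3-manifold, and we take $\Sigma = \bdy H_g$, so that $\calD_\Sigma(H_g) = \calD(H_g)$ as in \refexample{Handlebody}. The link $L$ is hyperbolic by assumption. Then I would compute $\chi(H_g)$. A genus-$g$ handlebody deformation retracts onto a wedge of $g$ circles, which has Euler characteristic $1 - g$; alternatively, one can compute from the standard CW structure with one $0$-cell, $g$ $1$-cells, and no higher cells. Either way, $\chi(H_g) = 1 - g$.

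Substituting into \refthm{VolumeMain} yields
\[\vol(\calD(H_g) - L) \geq 2\voct(c - (1 - g)) = 2\voct(g + c - 1),\]
which is the claimed inequality. No further obstacle arises; the corollary is a direct specialization.
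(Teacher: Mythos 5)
Your proof is correct and takes essentially the same approach as the paper: a direct application of \refthm{VolumeMain} together with the computation of $\chi(H_g)$. In fact your sign is the careful one --- the paper's one-line proof states the Euler characteristic as $g-1$, which is a typo, since only $\chi(H_g)=1-g$ substituted into $2\,\voct\,(c-\chi(M))$ yields the stated bound $2\,\voct\,(g+c-1)$.
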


\begin{proof}
The Euler characteristic of $M=H_g$ in this case is $g-1$. 
\end{proof}

\begin{corollary}\label{Cor:LowerBoundVirtual}
Suppose $\Sigma$ is a surface of genus $g$, and $M=\Sigma\times[-1,0]$, with $\calD(M)$ doubled along the boundary component $\Sigma\times\{0\}$, so that a fully augmented link $L$ in $\calD(M)$ is a \emph{virtual} fully augmented link. If such a link is hyperbolic, its volume satisfies
\[ \vol(\Sigma\times[-1,1]-L) \geq 2\,\voct\,(2g+c-2). \]
\end{corollary}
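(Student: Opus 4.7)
The plan is to apply \refthm{VolumeMain} directly, so the only real work is computing $\chi(M)$ for $M = \Sigma \times [-1,0]$.

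First I would observe that $M$ deformation retracts onto $\Sigma \times \{0\}$, which is a closed orientable surface of genus $g$. Hence $\chi(M) = \chi(\Sigma) = 2 - 2g$. Note that this is the full boundary component being doubled (up to the irrelevant $\Sigma \times \{-1\}$ piece, but since we are doubling $M$ only across $\Sigma \times \{0\}$, the resulting manifold $\calD_{\Sigma\times\{0\}}(M) = \Sigma \times [-1,1]$ is exactly the ambient manifold of a virtual fully augmented link as in \refprop{Reflection} and \refexample{Virtual}.)

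Second I would plug this into \refthm{VolumeMain}: the theorem gives
\[
\vol\bigl(\calD_{\Sigma\times\{0\}}(M) - L\bigr) \;\geq\; 2\,\voct\,(c - \chi(M)) \;=\; 2\,\voct\,(c - (2 - 2g)) \;=\; 2\,\voct\,(2g + c - 2),
\]
which is the stated bound. The hypothesis that $L$ is hyperbolic in $\calD_{\Sigma\times\{0\}}(M)$ is exactly what is needed to invoke \refthm{VolumeMain}.

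There is no real obstacle here; the corollary is a direct specialisation of \refthm{VolumeMain} to the case $M = \Sigma \times [-1,0]$. The only minor point worth making explicit in the written proof is that the Euler characteristic of a product $\Sigma \times I$ equals $\chi(\Sigma)$ by homotopy invariance, which gives the value $2 - 2g$ used in the computation above.
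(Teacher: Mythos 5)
Your proof is correct and is essentially the paper's proof: a direct application of \refthm{VolumeMain} after computing $\chi(\Sigma\times[-1,0])=\chi(\Sigma)=2-2g$. (In fact the paper's one-line proof states the Euler characteristic as ``$2g-2$'', which is a sign slip; your value $2-2g$ is the one that actually yields the stated bound $2\,\voct\,(2g+c-2)$.)
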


\begin{proof}
The Euler characteristic of $M=\Sigma\times[-1,0]$ is $2g-2$. 
\end{proof}

Note Adams~\emph{et~al} also obtain lower volume bounds on classes of virtual fully augmented links in~\cite{adamsetal:2021Generalized}. The bounds of \refcor{LowerBoundVirtual} agree when $\Sigma$ is a torus, and are stronger when $\Sigma$ has higher genus. 

%%%%%%%%%%%%%%%%%%%%%%%%%%%%%%%%%%%%%%%%%%%%%%%%%%%%%%%%%%%%%%%%%
\section{Upper volume bounds for virtual links}

In \cite{Purcell:AugLinksSurvey}, there are both upper and lower bounds on volumes of classical fully augmented links. In the case of fully augmented links in a doubled manifold, the existence of upper bounds depends on the ambient 3-manifold. Indeed, the second author has recently shown that for some 3-manifolds $M$, there are no upper volume bounds~\cite{Reid:NoUpperVolumeBound}. However, in the setting of virtual links, there are upper bounds. This was essentially proven in~\cite{KalfagianniPurcell}. We repeat the argument here for completeness. Note that Adams~\emph{et al} also considered volumes of subcases in \cite{adamsetal:2021Generalized}, but again \refthm{VolumeVirtualUpper} is more general. 

\begin{proof}[Proof of \refthm{VolumeVirtualUpper}]
The idea of the proof is to cut the link complement into pieces, and bound the hyperbolic volume of each piece. Start by cutting along the reflection surface, which slices crossing discs into half-discs. Next slice along each half disc. The result is two copies of $\Sigma\times(-1,0]$, with remnants of $L$ carved out of the boundary $\Sigma\times\{0\}$. Each black half-crossing-disc has become two ideal triangles on the surface $\Sigma\times\{0\}$. Adjacent to these across ideal edges are faces of the white reflection surface.

In the case $g=1$, form $2c$ ideal tetrahedra by coning each black triangle in $\Sigma\times(-1,0]$ to $\Sigma\times\{-1\}$; these have the form $T\times(-1,0]$, for $T$ a black ideal triangle. Note that $T\times\{-1\}$ will be an ideal vertex in the hyperbolic structure, hence the result is an ideal tetrahedron. Similarly cone the ideal triangles on the reflected copy of $M$, to obtain a total of $4c$ ideal tetrahedra so far, corresponding to black faces.

What remains are regions $W\times(-1,1)$, which come from white regions $W$ of the diagram that are reflected. Because the diagram is cellular, each region $W$ is an ideal polygon. Perform stellar subdivision on $W\times(-1,1)$. That is, let $V\in W$ be a point in $W$. Then $V\times(-1,1)$ is an ideal edge. Subdivide into ideal tetrahedra by adding faces of the form $E\times(-1,1)$, where $E$ is a line in $W$ from an ideal vertex of $W$ to the point $V$. Each resulting tetrahedron meets exactly one of the original edges of $W$. Each edge of $W$ came from the intersection of the reflection surface and the crossing discs; after cutting crossing discs, there are $6c$ such intersections. Thus we obtain $6c$ ideal tetrahedra from stellar subdividing. In total, there are $4c+6c=10c$ ideal tetrahedra making up the complement $\Sigma\times(-1,1)-L$, so the volume is at most $6c$ times the maximum volume $\vtet$ of an ideal tetrahedron. 

For genus $g>1$, the decomposition is similar, but ``coning'' gives hyperideal polyhedra; see \reffig{Hyperideal}. Black triangles are coned to have one hyperideal vertex and three ideal vertices. Each white face $W$ is stellar subdivided into tetrahedra with two hyperideal and two ideal vertices. 

\begin{figure}
  \centering
  \includegraphics{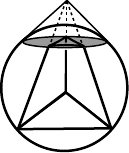}
  \caption{A hyperideal tetrahedron in $\HH^3$. The hyperideal vertex may be viewed as lying beyond $\bdy_\infty \HH^3$. }
  \label{Fig:Hyperideal}
\end{figure}

By work of Adams, Calderon, and Mayer~\cite[Corollary~3.4]{AdamsCalderonMayer}, the maximum volume of a tetrahedron with two ideal vertices and two hyperideal vertices is $\voct/2$. Thus the hyperideal tetrahedra coming from the white regions $W$ contribute at most $6c \cdot \voct/2$ to the volume.

As for the black triangles from crossing discs, to use the bound of Adams, Calderon, and Mayer, we identify two of these across a black face and again stellar subdivide into three hyperideal tetrahedra. This creates $4c/2\cdot3=6c$ additional tetrahedra with two hyperideal vertices and two ideal vertices. Thus the total volume bound is $12c\cdot \voct/2 = 6c\voct$. 

To prove sharpness in the case $g=1$, we present an example. In fact, this follows from work of Agol and Thurston in \cite[Appendix]{Lackenby:Vol}. The link shown in \reffig{ChainLinkTorus} has an infinite cover given by the ``infinite chain link fence'' of that paper. Agol and Thurston showed that the stellar subdivision described above gives six regular ideal tetrahedra per white face, and four regular ideal tetrahedra per black triangle. The covering map glues all white faces to a single face, and all black triangles to the four triangles coming from the single crossing circle shown.
\end{proof}

\begin{figure}[h]
  \centering
  \includegraphics{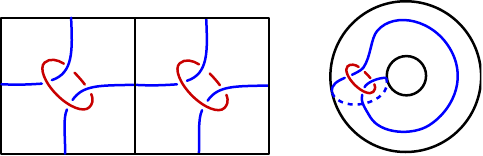}
  \caption{Left: Tiles making up a finite part of the infinite chain link fence. Right: A link on a torus whose cover is the infinite chain link fence.}
    \label{Fig:ChainLinkTorus}
\end{figure}

%%%%%%%%%%%%%%%%%%%%%%%%%%%%%%%%%%%%%%%%%%%%%%%%%%%%%%%%%%%%%%%%%
\section{Dehn fillings}

If we perform $1/n$ Dehn filling on a crossing circle of a fully augmented link, we obtain a link with the crossing circle removed and $2n$ crossings inserted. This can be used, for example, to build virtual knots and links, and other broad families of knots and links in doubled 3-manifolds. The work above allows us to give lower bounds on the volumes of such links.

\begin{proposition}\label{Prop:SlopeLength}
Let $L$ be a hyperbolic fully augmented link in a doubled 3-manifold $\calD(M)$. There is a horosphere expansion about cusps of $L$ so that the slope yielding $n$ crossings has length at least $n$. 
\end{proposition}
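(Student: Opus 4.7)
The plan is to invoke the canonical horoball expansion from \refthm{CuspShapesMain}(3), in which every black side of a cusp rectangle has length exactly one and every white side has length at least one, and then read the length of the Dehn filling slope directly off the cusp tiling described in parts~(1) and~(2) of that theorem. Two cases arise according to whether the crossing circle $C_j$ already carries a half-twist, but both will yield the same final inequality.

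In the no-half-twist case, \refthm{CuspShapesMain}(1) tells us that $\bdy N(C_j)$ is a rectangular torus consisting of two rectangles glued along their white sides, so the longitude $\lambda$ is a concatenation of two black sides of total length $2$ and the meridian $\mu$ is one white side of length $w \geq 1$, orthogonal to $\lambda$. Since $1/k$ Dehn filling on $C_j$ inserts $2k$ crossings, the slope yielding $n$ crossings (so $n$ even) is $\mu + (n/2)\lambda$, which has Euclidean length $\sqrt{w^2 + n^2} \geq n$.

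For a crossing circle with a half-twist, \refthm{CuspShapesMain}(1) states that the meridian now runs diagonally across one of the two rectangles. Placing the cusp lattice in coordinates with a vertical black direction of length $1$ and a horizontal white direction of length $w \geq 1$, we obtain $\lambda = (0,2)$ and $\mu = (w,1)$. The trivial filling along $\mu$ now produces the existing half-twist, i.e.\ one crossing, while $1/k$ filling produces $1 + 2k$ crossings, so the slope yielding $n$ crossings (with $n$ odd) is $\mu + ((n-1)/2)\lambda = (w, n)$, again of length $\sqrt{w^2 + n^2} \geq n$.

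The step requiring most care is the half-twist case: one must correctly identify the meridian as a diagonal of a rectangle, and this is exactly what \reflem{HalfTwistShear} provides, because the half-twist homeomorphism cuts along $D_j$ and reglues the top half on one side to the bottom half on the other. Once the cusp lattice is laid out in coordinates as above, both cases reduce to the same Pythagorean estimate $\sqrt{w^2+n^2}\geq n$, completing the proof.
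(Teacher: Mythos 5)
Your proof is correct and follows essentially the same route as the paper's: it invokes the canonical horoball expansion of \refprop{CanonicalEdge} (equivalently \refthm{CuspShapesMain}(3)), reads the meridian and longitude off the two-rectangle cusp tiling with the half-twist shearing of \reflem{HalfTwistShear}, and concludes with the same Pythagorean estimate $\sqrt{w^2+n^2}\geq n$ in both parity cases. Keeping the white side length as a variable $w\geq 1$ and explicitly noting that $1/k$ filling yields $2k$ or $2k+1$ crossings are minor presentational improvements over the paper's version, but the argument is the same.
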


\begin{proof}
Each crossing circle cusp is tiled by two rectangles, and we can choose the horosphere expansion as in \refprop{CanonicalEdge} such that each black side has length one, and white sides have length at least one. If $n=2k$ is even, the slope yielding $n$ crossings is $1/k$ on a crossing circle with no half-twist. If $n=2k+1$ is odd, the crossing circle has a half-twist, and the slope is $1/k$. 

If there is no half-twist, the meridian follows one white side and the longitude two black sides. Thus the $1/k$ slope, which follows one meridian and two longitudes, has length $\sqrt{1+4k^2}>2k=n$.

If there is a half-twist, the longitude still follows two black sides, but the meridian now is sheared to follow one white side and one black side. Thus the $1/k$ slope runs along $2k+1$ black sides and $1$ white side in the rectangular tiling of the cusp, to have length $\sqrt{1+(2k+1)^2} > n$. 
\end{proof}

\begin{theorem}\label{Thm:VolumeDehnFilling}
Let $L$ be a fully augmented link in a doubled 3-manifold $\calD(M)$ with $c$ crossing circles. Suppose the knot or link $K$ is obtained from $L$ by adding at least $m\geq 7$ crossings in each twist region. Then the volume of $K$ is bounded below as follows:
\[ \vol(\calD(M)-K) \geq \left(1- \left( \frac{2\pi}{m} \right)^2 \right)^{3/2} \cdot 2\,\voct\,(c-\chi(M)) \]
\end{theorem}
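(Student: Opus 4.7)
The plan is to combine three ingredients we already have: the unfilled lower bound \refthm{VolumeMain}, the slope length estimate \refprop{SlopeLength}, and the Dehn filling volume theorem of Futer--Kalfagianni--Purcell~\cite{FKP:DehnFilling}. Recall the latter says that if $N$ is a cusped hyperbolic 3-manifold and $s_1,\dots,s_k$ are slopes, one on each cusp to be filled, all of length at least $\ell_{\min}$ in an embedded horoball neighbourhood, and $\ell_{\min}\geq 2\pi$, then the filled manifold $N(s_1,\dots,s_k)$ is hyperbolic and satisfies
\[
\vol(N(s_1,\dots,s_k)) \;\geq\; \left(1-\left(\tfrac{2\pi}{\ell_{\min}}\right)^{\!2}\right)^{3/2}\!\vol(N).
\]

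First I would identify $K$ as a Dehn filling of $L$. Adding $m$ crossings in the twist region of a crossing circle $C_j$ corresponds to a $1/k$ filling of the cusp of $C_j$, with $m=2k$ or $m=2k+1$ according to whether a half-twist is absent or present; this is precisely the set-up analysed in \refprop{SlopeLength}. Therefore, using the horoball expansion supplied by \refprop{CanonicalEdge}, each of the $c$ filling slopes has length at least $m$. Since the FKP theorem only needs an embedded horoball system (enlarging to the maximal one can only increase the slope length, so the bound persists), we may take $\ell_{\min}\geq m$.

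Next, the hypothesis $m\geq 7$ ensures $m>2\pi$, so the FKP theorem applies simultaneously to all $c$ crossing-circle cusps, giving
\[
\vol(\calD(M)-K)\;\geq\;\left(1-\left(\tfrac{2\pi}{m}\right)^{\!2}\right)^{3/2}\vol(\calD(M)-L).
\]
Finally, \refthm{VolumeMain} provides $\vol(\calD(M)-L)\geq 2\voct(c-\chi(M))$, and substituting this into the previous inequality yields the desired bound.

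The only non-routine step is verifying that \refprop{SlopeLength} and \refprop{CanonicalEdge} together furnish the simultaneous, embedded horoball neighbourhood the FKP theorem needs, rather than a cusp-by-cusp statement. This is the potential obstacle, but \refprop{CanonicalEdge} is already proved by uniformly expanding horoballs about every cusp of $L$ until first tangency, so the lengths computed in \refprop{SlopeLength} on each crossing-circle cusp all refer to one common embedded system. Hyperbolicity of $\calD(M)-L$ (implicit in invoking \refthm{VolumeMain} and \refprop{SlopeLength}) and of $\calD(M)-K$ (which follows from FKP, or alternatively from the 6-theorem since $m\geq 7>6$) therefore pose no further difficulty.
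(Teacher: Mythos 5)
Your proposal is correct and follows essentially the same route as the paper: apply the Futer--Kalfagianni--Purcell Dehn filling theorem with $\ell_{\min}\geq m$ from \refprop{SlopeLength}, then substitute the lower bound from \refthm{VolumeMain}. Your additional care about the slopes all living in one common embedded horoball system (from \refprop{CanonicalEdge}) is a point the paper leaves implicit, but it does not change the argument.
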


\begin{proof}
By Futer--Kalfagianni--Purcell~\cite[Theorem~1.1]{FKP:DehnFilling}, the volume of the filled manifold is bounded by
\[ \left(1-\left(\frac{2\pi}{\ell_{\mathrm{min}}}\right)^2\right)^{3/2}\vol(\calD(M)-L) \]
where $\ell_{\mathrm{min}}$ is the minimum length over all slopes of Dehn fillings to produce $K$. By \refprop{SlopeLength}, $\ell_{\mathrm{min}}\geq m$. By \refthm{VolumeMain}, the volume $\vol(\calD(M)-L)$ is at least $2\,\voct\,(c-\chi(M))$.
\end{proof}

\bibliographystyle{amsplain}
\bibliography{biblio}

\end{document}